\documentclass{amsart}
\usepackage[utf8]{inputenc}
\usepackage{times}
\usepackage{geometry}
\usepackage{latexsym, amsmath, amscd,amsthm}
\usepackage{graphicx}
\usepackage[percent]{overpic}
\usepackage{units}
\usepackage{hyperref}
\usepackage{euscript}
\usepackage{multicol}
\usepackage{epstopdf}
\usepackage{paralist}
\usepackage{multicol}
\usepackage{appendix}

\newtheorem{theorem}{Theorem}

\newtheorem{lemma}[theorem]{Lemma}
\newtheorem{proposition}[theorem]{Proposition}
\newtheorem{corollary}[theorem]{Corollary}

\theoremstyle{definition}

\newtheorem{conjecture}[theorem]{Conjecture}


\def\thm#1{Theorem~\ref{thm:#1}}
\def\lem#1{Lemma~\ref{lem:#1}}
\def\fig#1{Figure~\ref{fig:#1}}
\def\prop#1{Proposition~\ref{prop:#1}}
\def\cor#1{Corollary~\ref{cor:#1}}


\newcommand{\R}{\mathbb{R}}

\begin{document}

\title{The Mathematics of Tie Knots}

\author{Elizabeth Denne}
\address{Elizabeth Denne: Washington \& Lee University, Mathematics Department, Lexington VA}
\email[Corresponding author]{dennee@wlu.edu}
\urladdr{https://elizabethdenne.academic.wlu.edu/}
\author{Corinne Joireman}
\address{Corinne Joireman: Washington \& Lee University}
\email{joiremanc21@mail.wlu.edu}
\author{Allison Young}
\address{Allison Young: Washington \& Lee University}
\curraddr{University of Virginia, Charlottesville VA}
\email{aky3gd@virginia.edu}
\date{October 23, 2020}
\subjclass[2010]{Primary 57M25}
\keywords{Knots, neck ties, torus knot, twist knot, alternating knot, prime knot.}

\begin{abstract} In 2000, Thomas Fink and Young Mao studied neck ties and, with certain assumptions, found 85 different ways to tie a neck tie. They gave a formal language which describes how a tie is made, giving a sequence of moves for each neck tie. The ends of a neck tie can be joined together, which gives a physical model of a mathematical knot that we call a {\em tie knot}. In this paper we classify the knot type of each of Fink and Mao's 85 tie knots. We describe how the unknot, left and right trefoil, twist knots and $(2,p)$ torus knots can be recognized from their sequence of moves. We also view tie knots as a family within the set of all knots. Among other results, we prove that any tie knot is prime and alternating. 
\end{abstract}

\maketitle
\section{Introduction} 
There are many, many ways to tie a neck tie, three such ways are shown below in \fig{examples-tie}. In 2000, Thomas Fink and Yong Mao wrote a fantastic paper \cite{finkmao} where they described 85 ways to tie a neck tie. They later turned this into a delightful little book \cite{finkmao-book}. Many of the 85 neck ties were familiar, but some were new to men's fashion. In their paper, Fink and Mao developed a particular set of rules that describes neck ties. They used a formal language to describe, then count all neck ties. They made several assumptions about neck ties. One of the more important ones is that the facade of the neck tie is flat, as illustrated in the four-in-hand neck tie in \fig{examples-tie} (left). As can be seen in the Van Wijk and Trinity neck ties (center and right in \fig{examples-tie}), such an assumption means that not all neck ties can be described by their language.

\begin{figure}[htbp]
\includegraphics[scale=0.5]{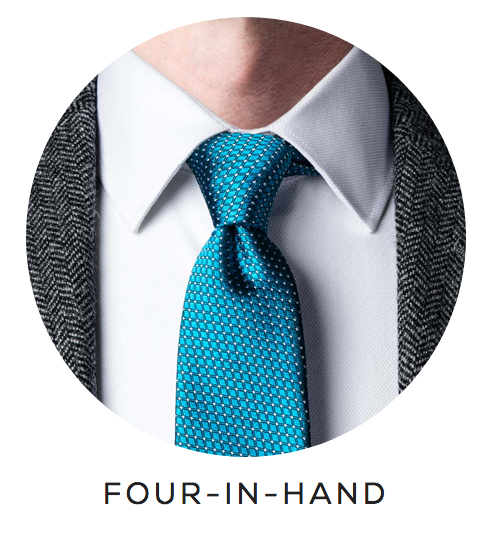}
\includegraphics[scale=0.5]{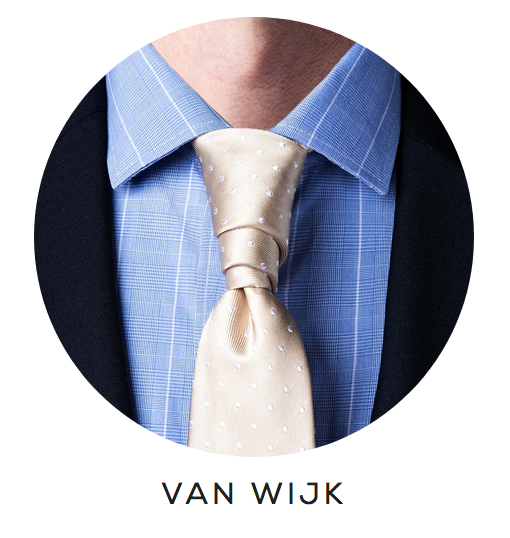}
\includegraphics[scale=0.5]{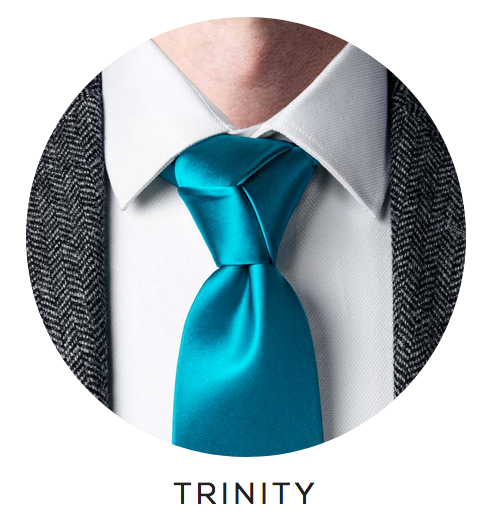}
\label{fig:examples-tie}
\caption{Different examples of tie knots from \cite{Ties}.}
\end{figure}

In 2015, Dan Hirsch, Ingemar Markstr\"om, Meredith L. Patterson, Anders Sandberg and Mikael Vejdemo-Johansson  \cite{more-ties} extended Fink and Mao's enumeration of neck ties to include ties with a textured front, and those tied with the narrow end of a tie. They created a new formal language to do so, and counted $266,682$ tie-knots that seem tie-able.  Vejdemo-Johansson~\cite{generator} hosts an online tie knot generation toy which allows the user to randomly pick one of these neck ties. In addition, Vejdemo-Johansson believes that their count of neck ties is a lower bound. He has a list of known named tie knots and their knotting sequences \cite{known-ties}. There, Vejdemo-Johansson lists the TrueLove tie knot as a knot which does not fit the Fink-Mao description, nor the extended enumeration in \cite{more-ties}. In addition, N. Scoville \cite{scoville} has an interesting webpage where he shows many other interesting neck ties. Many of these do not appear to obey even the more relaxed conditions of \cite{more-ties}. (The Atomic knot, the Danish knot and the Lady slipper knot are a few examples showing extra twists and loops which may not be captured by the formal language of \cite{more-ties}.)

 In this paper, we consider neck ties as mathematical knots, by joining the two ends of the tie together.  In Section~\ref{section:knots}, we first review of some of the basic facts about knots that we need. Next, in Section~\ref{section:tieknots}, we review the Fink and Mao description of neck tie knots, and note that the list of left, right and center moves used to tie a neck tie is called a {\em tie sequence}. We then create a (mathematical) knot simply by joining the two ends of the neck tie to create what we call a {\em tie knot}. Here, we can restrict our attention to the 85 tie knots that Fink and Mao described, we will call these {\em FM-tie knots}. We can also look at the infinite family of tie knots that follow the Fink and Mao rules, but where there are no restrictions on the number of moves needed to create the tie knot.  (Our ties are as long as needed to tie the tie knot.) Thus we consider {\em tie knots} as a family within the set of all knots. Just as we consider torus knots, or alternating knots as a family within the set of all knots.

We worked out the knot type of each of the 85 FM-tie knots. Section~\ref{section:data} summarizes this data; the complete list can be found in Appendix~\ref{appendix:data}. As we worked out the knot type of our tie knots, we realized that the tie sequences can be reduced while preserving the knot type in four standard ways.  These ways are explained in Section~\ref{section:reductions}, along with the result that if a tie knot is tied with $k$ moves, then it has a knot diagram with $k-1$ or fewer moves.  In Section~\ref{section:examples}, we examine the ways the unknot, twist knot and $(2,p)$ torus knots appear as tie knots. We show how to recognize these knots from their tie sequence. As the trefoil knot is both a twist knot and $(2,p)$ torus knot, we discuss this as a separate case.   

Finally, in Section~\ref{section:family}, we prove several more results about tie knots. Perhaps the most important is that any tie knot is prime and alternating. We also show that the diagram corresponding to the reduced tie sequence of a tie knot is itself a reduced knot diagram. There are many other interesting questions still to be answered about tie knots. For example, after looking at the data of tie knots, we conjecture that tie knots are in fact prime, alternating 2-bridge knots.


\section{Knots}\label{section:knots}
We start by recalling some basic facts about mathematical knots that can be found in any introductory knot theory text, for instance \cite{adams,johnhen, liv,mura}. A {\em knot} is a simple closed curve in $\R^3$, and a {\em link} is a disjoint union of knots. Two knots are {\em equivalent} if one can be deformed into the other without one strand of the knot passing through another. (More formally, if they are ambient isotopic.) In this paper, we only consider {\em tame} knots, those that are equivalent to smooth knots. Knots are most often visualized using  {\em{knot diagrams}}, which are knot projections that include the crossing information of the strands. 

\begin{figure}[htbp]
\begin{center}
\includegraphics{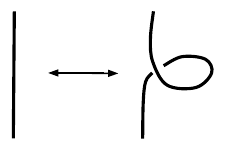} \ \qquad \includegraphics{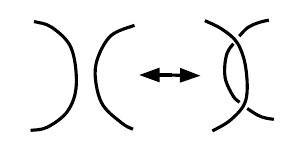}
\ \qquad   \includegraphics[scale=1]{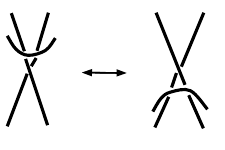}
\caption{Reidemeister moves. From left to right, R1-, R2- and R3-moves.}
\label{fig:R-moves}
\end{center}
\end{figure}

Perhaps an easier way to understand whether two knots are  equivalent is to look at their corresponding knot diagrams. Kurt Reidemeister \cite{Reidemeister} proved that two knots are equivalent if and only if their corresponding knot diagrams are related by planar isotopy and a finite sequence of Reidemeister moves. The
Reidemeister 1, 2 and 3 moves are illustrated in \fig{R-moves}, where we assume that the knot diagram is fixed other than the local changes shown.

In this paper we will often discuss two families of knots: twist knots and torus links.  
A $T_n$ {\em twist knot} is an alternating knot that consists of $n$ crossings created by twisting together two strands and a clasp that creates another two crossings. The $T_3$ twist knot is shown in \fig{exTwistTorus} (left).
A {\em $(p,q)$-torus link}, denoted by $T(p,q)$, is any link which can be embedded in the torus.   A $(p,q)$-torus link twists $p$ times around the longitude and $q$ times around the meridian of the torus. It is a well known fact (see \cite{adams, mura}) that if $\gcd(p,q)=1$, then $T(p,q)$ is a knot. In this paper we focus our attention on the $(2, p)$-torus knots (where $p$ is odd).  For example, Figure~\ref{fig:exTwistTorus} (right) shows a $T(2,5)$ torus knot.

\begin{figure}[htbp]
\begin{center}
\begin{overpic}[scale=1]{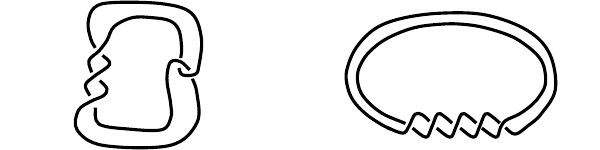}
\put(22,12){$T_3$}
\put(3,12){$n=3$}
\put(35,12){the clasp}
\put(71,12){$T(2,5)$}
\end{overpic}
\caption{On the left, the $T_3$ twist knot is also known as the $5_2$ knot in knot tables. On the right, the $T(2,5)$ torus knot is also known as the $5_1$ knot.}
\label{fig:exTwistTorus}
\end{center}
\end{figure}

\section{Tie Knots}\label{section:tieknots}
In this section we repeat the description and construction of tie knots given by Fink and Mao \cite{finkmao,finkmao-book}.  Readers who know this construction are encouraged to skim through this section to become familiar with the meanings of the emphasized terms.

\begin{figure}[htbp]
\begin{center}
\begin{overpic}[scale=1]{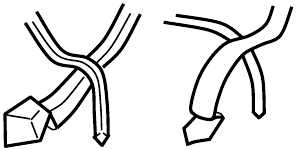}
\put(26,40){\small{C}}
\put(14,24){\small{L}}
\put(40,15){\small{R}}
\put(15,-5){passive end}
\put(80,45){\small{C}}
\put(62,29){\small{L}}
\put(90,25){\small{R}}
\put(56,-5){active end}
\end{overpic}  
\caption{Left, right, and center regions in relation to the first move of a tie.  The left image shows a $L_o$ (out-move) while the right shows a $L_i$ (in-move).}
\label{fig:tieRegions}
\end{center}
\end{figure}

We start by draping a tie around the neck with the thinner end, the {\em passive strand}, to the left. The thicker end is the {\em active strand}, and we assume we start the tie knot by wrapping the active strand to the {\em left} either over or under the passive strand, as illustrated in Figure~\ref{fig:tieRegions}.  We pause to note two things here. First, the images of ties knots in this paper represents the view seen in a mirror. Second, tie knots started by wrapping the active strand to the right (as opposed the left) are mirror images of the left-hand ones,  so we do not consider them here.

Now once the first left move is made, the tie forms a Y-like shape with three regions: left, right, and center (see Figure~\ref{fig:tieRegions}).  A tie knot is then made by half-turns into and out of these regions using the active strand.   In general, when the active strand crosses into a region (and over the passive strand) the move is described as an {\em in-move}, and when the active strand crosses  out of a region (and under the passive strand) the move is an {\em out-move}.  How many moves are used to create a tie knot affects how you start to tie a physical tie. When there are an even number of moves, as in the four-in-hand tie knot,  the ties are begun with fabric lying the correct way around the neck. When there are an odd number of moves, as in the Pratt (or Shelby) tie knot, the underside of the tie is visible around the neck. These choices mean that the correct side of the active strand is visible once the tie knot is completed.

The moves into and out of the three regions gives a finite set of {\em tie moves} $\{ L_i, L_o, R_i, R_o, C_i, C_o\}$, where the subscripts $_i$ or $_o$ represent the direction in or out. A tie knot is thus represented by a {\em tie sequence} which is an ordered list of tie moves. In addition, tie sequences must satisfy the following rules.
\begin{compactenum}
\item[(0)] Tie sequences must begin with $L_i$ or $L_o$.
\item No region ($L,R,C$) is repeated. (For example: $L_iL_o$ is forbidden.) 
\item The direction must alternate between in $_i$ and out $_o$.
\item Tie knots must end with either $L_oR_iC_oT$ or $R_oL_iC_oT$. Here, the $T$ denotes the tuck move, where the end of the tie is threaded through the single loop just created, as illustrated second from the right in \fig{fourinhand}.
\item Ties knots are made with at most 9 tie moves (not counting the final $T$ move).
\end{compactenum}

In the rest of the paper, we will refer to these assumptions as the {\em FM-assumptions}\label{FM-assumptions}. Note that (3) means that when the tie knot is tightened we see a {\em flat facade} with the knot hidden behind it. For example, the four-in-hand tie, shown on the left in \fig{examples-tie}, has such a flat facade. Later on, we will often refer to the fact there are two ways of tying the flat facade. Condition (4) arises naturally as  tie knots are made from a finite length of a tie. Finally, (3) and (4) together mean that all tie sequences have between 3 and 9 tie moves.   With the assumptions (0)--(4), Fink and Mao \cite{finkmao} found 85 distinct tie sequences. As an example, in \fig{fourinhand}, we see the four-in-hand tie knot is made using the tie sequence $L_iR_oL_iC_oT$.  

\begin{figure}[htbp]
\begin{center}
\begin{overpic}[scale=1]{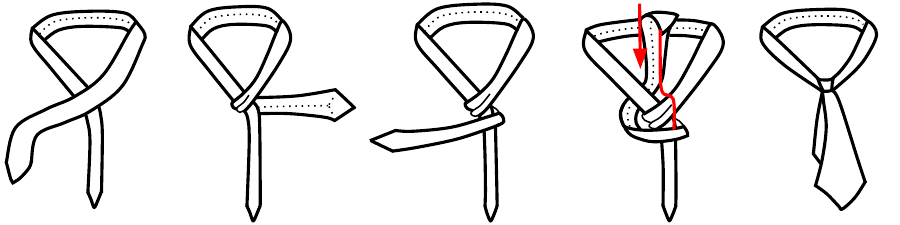}
\end{overpic}  
\caption{The four-in-hand tie knot is formed following tie sequence $L_iR_oL_iC_oT$.}
\label{fig:fourinhand}
\end{center}
\end{figure}

We want to find the knot type of each of the 85 tie sequences. To do this, we first make the tie into a (mathematical) knot by joining the ends of the passive and active strands together, and then ignoring the thickness of the tie, viewing it as a 1-dimensional simple closed space curve. We will call the knots corresponding to the 85 tie sequences {\em FM-tie knots}.  We can also consider the infinite family of {\em tie knots} to be the family of tie knots constructed from tie sequences using FM-assumptions (0)--(3). This means the tie knots can be made with any number of moves that we like\footnote{Thanks to Colin Adams for making this suggestion}. The FM-tie knots are thus a finite subset of the family of tie knots.  We warn the reader that the word tie knot can refer to both a  physical knot made in a tie from a tie sequence, and also to refer to the (mathematical) knot created from it. The context will make it clear which is meant.

We add one more remark about tie sequences (following \cite{finkmao}). Since each tie sequence ends in $C_0$, then by moving backwards along the tie sequences, we can deduce that tie sequences with an even number of moves start with $L_i$ while tie sequences with an odd number of moves starts with $L_o$. However, we will usually include the $_i$ and $_o$ subscripts to make the tie sequences easy to follow. 

We conclude this section by remarking that in their paper,  Fink and Mao \cite{finkmao}  also discuss some other aesthetic considerations when tying ties.   Some of the 85 tie sequences corresponded to known tie knots, but many were new. This is explored further in their book \cite{finkmao-book} and they note their list is not exhaustive by any means. For example, tie knots like Van Wijk \cite{VanWijk} shown in the center in \fig{examples-tie} (or the Prince Albert \cite{Albert}) are completed by the active strand tucking under 3 (or 2 loops) of the tie. These tie knots do not satisfy condition (3) above and so are not found in Fink and Mao's list. As discussed in the introduction, these ties, and many others are found in \cite{more-ties}.


\subsection{Tie knot diagrams}\label{tiediagrams}

To understand tie knots, we first create knot diagrams to represent them. We will refer to these as  {\em tie knot diagrams}, or more usually, {\em tie diagrams}. There is a standard way to do this as shown in \fig{tie-diagram}. Note that we divide each tie knot into two pieces. The active strand starts at point $a$ (just before the first left crossing), follows the tie sequence around the tie and ends at point $b$. The passive strand which starts at point $b$ travels up the {\em straight strand}, then around the {\em neck loop} back to point $a$. We will usually include the points $a$ and $b$ in our tie diagrams to make them easier to read. Tie knots can by given a natural orientation by following the order of the tie sequence along the active strand.
\begin{figure}[htbp]
\begin{center}
\begin{overpic}[scale=1]{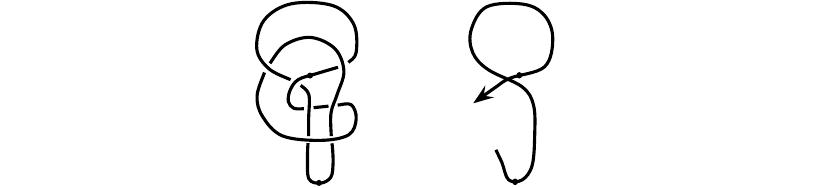}
\put(32,11){$L_i$}
\put(34,7.3){$R_o$}
\put(34,3.5){$L_i$}
\put(28.5,13){$C_o$}
\put(36.75,14.3){$a$}
\put(38,-2){$b$}
\put(42.25,13){$t_1$}
\put(40.5,7.7){$t_2$}
\put(41,3.7){$t_3$}
\put(62,14.3){$a$}
\put(62,-2){$b$}
\put(68,17){neck loop}
\put(66,7){straight strand}
\put(50,8){active strand}
\put(57,5){$\ddots$}
\end{overpic}  
\caption{On the left, the tie diagram for the four-in-hand tie knot $L_iR_oL_iC_oT$. On the right, the passive strand consists of the neck loop and the straight strand, while the active strand (not shown) starts at point $a$ and follows on from the arrow to point $b$.}
\label{fig:tie-diagram}
\end{center}
\end{figure}

When we create our tie knot diagrams, we assume that there are no unnecessary crossings. This means crossings appear in precisely two ways. The first way is from $L,R,C$ moves, where the active strand crosses over or under the straight strand or the neck loop of the passive strand. An in-move means the active strand crosses over the passive strand, and an out-move means the active strand crosses under the passive strand.  The second way crossings appear is from the final $T$ tuck move. This consists of three crossings which we denote by $t_1,t_2,t_3$. The $t_1$ crossing occurs when the active strand crosses the neck loop,  and so $t_1=L_i$ or $t_1=R_i$ (whichever makes the most sense). This is shown in \fig{tie-diagram} left. The $t_2$ and $t_3$ crossings are formed when the active strand pases over, then under the loop of the active strand. So the $t_2$ and $t_3$ crossings are the only crossings of the active strand with itself.

The assumptions that there are no unnecessary crossings has two consequences for the crossings corresponding to the $L,R,C$ moves. As we move along the active strand we first note that the crossings appear in a downwards direction along the straight part of the passive strand. Secondly, as we move along the active strand, the crossings along the neck loop move in either an increasing clockwise direction from the start point $a$ on the left side, or in an increasing counterclockwise direction from point $a$ on the right side.  

In the rest of the paper, we will use tie sequence notation to indicate crossings in the corresponding tie diagram, remembering that the directions $_i$ or $_o$ correspond to over- and under crossings respectively. We use the notation [\dots] to indicate some sequence of tie knot moves. We immediately deduce the following useful lemma. 

\begin{figure}[htbp]
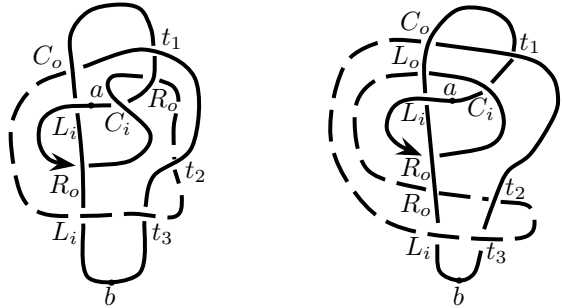

\begin{overpic}{crossing-replace}
\put(28,16){$L_i$}
\put(27.7,10){$R_o$}
\put(33.5,16.5){$C_i$}
\put(38,19){$R_o$}
\put(28,5){$L_i$}
\put(26,23.5){$C_o$}
\put(39.5,25.5 ){$t_1$}
\put(42,11.5 ){$t_2$}
\put(38.5,5 ){$t_3$}
\put(32,20){$a$}
\put(33.5,-1.75){$b$}
\put(64.5,17.5){$L_i$}
\put(64.5,11.75){$R_o$}
\put(71.5,18){$C_i$}
\put(63.5,23.5){$L_o$}
\put(64.5,8){$R_o$}
\put(65,3.5){$L_i$}
\put(64.5,27){$C_o$}
\put(77,25 ){$t_1$}
\put(75.5,9.75 ){$t_2$}
\put(73.5,3 ){$t_3$}
\put(69,20.25){$a$}
\put(70,-1.75){$b$}
\end{overpic} 
\caption{In the tie sequence $L_iR_oC_iR_oL_iC_oT$, the $C_iR_o$ crossings on the left are replaced by $C_iL_oR_o$ on the right. The dotted line shows the strand of the knot which moves.}
\label{fig:X-move}
\end{figure}

\begin{lemma}\label{lem:crossing-replace} Any tie knot diagram with crossings $[\dots] CL [\dots]$ can be replaced by $[\dots] CRL[\dots]$, where both the $RL$ crossing type match $L$, without changing the knot type. Similarly,
any tie knot diagram with crossings $[\dots] CR[\dots]$ can be replaced by $[\dots] CLR[\dots]$, where both the $LR$ crossing type match $R$, without changing the knot type.
\end{lemma}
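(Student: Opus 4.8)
The plan is to prove this by exhibiting an explicit ambient isotopy of the tie knot in $\R^3$ that realizes the claimed change of diagram, exactly as indicated by the dotted strand in \fig{X-move}. Since an ambient isotopy preserves knot type, it will suffice to check that the isotopy can actually be carried out and that it alters the crossing data in precisely the stated way: one new crossing is created, of the correct over/under type, and no other crossing is disturbed. I will argue the first case, $[\dots]CL[\dots]\mapsto[\dots]CRL[\dots]$, in detail; the second case, $[\dots]CR[\dots]\mapsto[\dots]CLR[\dots]$, is proved by the identical argument with the roles of the left and right regions interchanged.

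First I would isolate the relevant arc. By the construction in Section~\ref{section:tieknots}, the active strand passes through its crossings in the order of the tie sequence, so there is a well-defined sub-arc $\alpha$ of the active strand running from the crossing labelled $C$ to the immediately following crossing labelled $L$; along $\alpha$ itself no further crossing occurs. The move is then to slide $\alpha$ across the neck loop toward the start point $a$, as the dotted line in \fig{X-move} shows. This slide carries $\alpha$ across the neck loop once, introducing a single new intersection with the passive strand lying in the $R$ region, which I take to be the new crossing $R$. Because the whole manipulation is an ambient isotopy of the knot, the knot type is unchanged.

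There are two points to verify. For the over/under type, observe that throughout the slide the arc $\alpha$ stays entirely on one side of the passive strand, and which side is dictated by whether the following move is an in-move (the active strand runs over the passive strand) or an out-move (it runs under). Consequently the newly created $R$ crossing inherits the over/under character of the move that produced $L$, so that both the $R$ and $L$ crossings match $L$, as claimed. For the count, the key is that $\alpha$ can be pushed past the neck loop in the direction of $a$ without meeting any other strand: by the \emph{no unnecessary crossings} convention and the fact that the crossings on the neck loop occur in a fixed rotational order increasing from $a$ (and those on the straight strand in a fixed downward order), the portion of the loop swept out by $\alpha$ near $a$ is genuinely clear, and it is the closure of the neck loop at $a$ that permits a single crossing to appear rather than a cancelling pair.

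The main obstacle I anticipate is exactly this last bookkeeping: certifying that the region swept by $\alpha$ is empty of all other strands, so that the slide adds one crossing and no spurious ones. This is where the structural facts about tie diagrams must be invoked carefully, since a careless slide would instead produce a pair of crossings via an R2-type poke. Once the emptiness of the swept region is established, the argument becomes purely local, the determination of the over/under sign is immediate from the in/out convention, and the second statement of the lemma follows with no new ideas.
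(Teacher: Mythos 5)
Your overall strategy --- realize the diagram change by an explicit ambient isotopy of the arc $\alpha$, as indicated by the dotted strand in \fig{X-move} --- is exactly the paper's approach. But the justification you give for why the isotopy is legitimate is misdirected, and the step you yourself flag as ``the main obstacle'' (that the region swept by $\alpha$ is empty of all other strands) is never actually proved; it is asserted in one sentence to follow from the no-unnecessary-crossings convention. Worse, it is the wrong condition to aim for: it is stronger than what is needed, and it fails in general. The slide carries $\alpha$ around the point $a$, past the place where the neck loop closes up and the initial segment of the active strand emanates, and for longer sequences $[\dots]$ the arc may have to sweep across other strands of the diagram as well. If the validity of your isotopy genuinely depended on the swept region being empty, the argument would collapse in precisely those cases.

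The idea that actually makes the move work does appear in your proposal, but you use it only to compute the sign of the new crossing: the moving arc $\alpha$ is an \emph{under-arc} (when the crossing following $C$ is an out-move; an over-arc for an in-move), meaning every crossing it participates in, in both the old and the new position, has $\alpha$ on the same side. The paper's proof is exactly this observation: the strand is moved \emph{under (or over) the rest of the knot} to its new position. An arc all of whose crossings are under-crossings can be pushed slightly below the plane of the diagram, re-routed arbitrarily underneath, and brought back up; what it sweeps across in between is irrelevant, since crossings picked up at intermediate stages do not persist. Note also that your side condition is stated only relative to the passive strand, whereas the arc must stay under (or over) \emph{all} other strands, including other portions of the active strand. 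The only thing that then requires checking is the \emph{final} position: that the new route can be drawn meeting the rest of the diagram in only the two claimed crossings, which uses the structure of the tie diagram near $a$ (the first crossing occurs just after $a$, so the corridor adjacent to $a$ is clear). Reorganized this way --- under/over-arc re-routing for the isotopy, plus the clear corridor at $a$ for the final crossing count --- your argument closes; as written, it rests on an unproved, unnecessary, and generally false emptiness claim.
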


This is illustrated in Figure~\ref{fig:X-move}. Before the proof, we note two things. First, we have not listed the direction (in or out) of each tie move so as to not overwhelm the reader with cases. Second, we cannot use the replaced crossings to give a tie sequence satisfying the FM-assumptions, since condition (2) fails. 

\begin{proof} The case  $[\dots] CR[\dots]$ to $[\dots] CLR[\dots]$ is shown in Figure~\ref{fig:X-move}.  Recall that the start of the tie sequence starts just after point $a$. The two knots are equivalent as can be seen by moving the strand under (or over) the rest of the knot to the new position.  
\end{proof}


\section{Knot type data}\label{section:data}
We took each of Fink and Mao's 85 tie sequences, created the corresponding tie knot, and found the knot type\footnote{Fink and Mao \cite{finkmao} listed the knot type for some tie sequences, but not all 85.}. A tedious, but straightforward check verifies each knot type.  For small crossing knots, we classified the knot type by hand using knot tables (see for instance \cite{adams, liv, Rolf}). For the higher crossing knots (that were more complex), we used the Knot Identification Tool \cite{KIT}.
A complete list of the tie knots and their corresponding knot type can be found in Appendix~\ref{appendix:data}. The list is ordered by knot type, then by the Fink and Mao numbering (1--85). We have summarized the data from the list below.

We found 27 different knot types. Table~\ref{tab:table1} lists all of the knot types that appear for a particuar number of tie moves. As expected, as the number of tie moves increases, the number of crossings in the tie knot diagrams increases, and so the number of knot types increases.
We also see that a particular knot type can be made with different numbers of tie moves. For example, the $3_1$ trefoil knot can be made from tie sequences with 4, 7, 8, or  9 tie moves. 

\begin{table}[htbp]
 \begin{tabular}{| c|l |}
 \hline
    \# Tie Moves & Knot Types  \\ \hline \hline
    3 & $0_1$\\
    4 & $3_1$\\
    5 & $0_1,4_1$\\
    6 & $0_1, 5_1, 5_2$\\
    7 & $0_1,3_1, 6_1,6_2,6_3$\\
    8 & $0_1,3_1, 4_1, 7_1, 7_2,7_3, 7_4,7_5, 7_6, 7_7$\\
    9 & $0_1,3_1, 4_1, 5_1, 5_2,  8_1,8_2, 8_3, 8_4, 8_6, 8_7, 8_8, 8_9, 8_{11}, 8_{12}, 8_{13}, 8_{14}$\\
    \hline
\end{tabular}
\caption{Knot types occurring for each number of tie moves.}
\label{tab:table1}
\end{table}

By further examining the data in Appendix~\ref{appendix:data}, we see that of the 85 tie knots, there are
\begin{compactenum}
\item11 unknots, 
\item 13 trefoil knots, 
\item 25 additional twist knots ($4_1, 5_2, 6_1, 7_2, 8_1$),
\item 6 additional $(2,p)$ torus knots ($5_1,7_1$).
\end{compactenum}
We list the trefoil knot separately, since it is both a twist and torus knot.  This means that about 13\% of the tie knots are unknots, and about 52\% are trefoil, twist or $(2,p)$ torus knots. The remaining 35\% (or 30 out of 85) tie knots represent the remaining 18 knot types.

Another way to summarize these numbers is to look at Tables~\ref{tab:table2} and~\ref{tab:table3}. In the Table~\ref{tab:table2}, the knot type is given in the first column, and the number of tie knots for that knot type is given in the second column. Now, for a particular knot type, the third column shows the number of tie knots for each number of tie moves. For example, the second row shows that there are 13 tie knots which are trefoil knots.  The third column shows that of those 13 tie knots, one has 4 moves, four have 7 moves, four have 8 moves and four have 9 moves.

\begin{table}[htbp]
 \begin{tabular}{| c | c | l |}
 \hline
   Knot type & \# Tie knots & \# Tie knots $\times$ \# Tie moves.  \\ \hline \hline
  $0_1$ & 11 & $1\times3, 2\times5 , 2\times 6, 2\times 7, 2\times  8, 2\times 9$\\ \hline
  $3_1$ & 13 & $1\times 4, 4\times 7, 4\times 8, 4\times 9$\\ \hline
$4_1$ & 9 & $ 1\times 5, 4\times 8, 4\times 9$ \\ \hline
$5_1$ & 5 & $ 1\times 6, 4\times 9$ \\ \hline
$5_2$ & 10 & $2\times 6, 8\times 9 $ \\ \hline
$6_1$ & 2 & $2\times 7$\\ \hline
$7_1$ & 1 & $1\times 8$ \\ \hline
$7_2$ & 2 & $2\times 8$ \\ \hline
$8_1$ & 2 & $2\times 9$ \\ \hline
\end{tabular}
\caption{Number of tie knots of a given knot type for a particular number of tie moves.}
\label{tab:table2}
\end{table}

The remaining knot types either have 1 or 2 tie knots. Rather than continue the table above, we have condensed this information in Table~\ref{tab:table3}. We have not listed the number of tie moves, since the knots with crossing number 6 all come from tie knots with 7 tie moves. Similarly, the knots with crossing number 7, respectively 8, come from tie knots with 8, respectively 9, tie moves. 

\begin{table}[htbp]
 \begin{tabular}{| c| l| }
 \hline
    \# Tie knots & Knot Type \\ \hline \hline
2 & $6_2, 7_3, 7_5, 7_6, 8_2, 8_4,8_6, 8_7, 8_8,  8_{11}, 8_{13}, 8_{14}$\\ \hline
1 & $6_3,7_4,  7_7, 8_3, 8_9, 8_{12}$
\\ \hline
\end{tabular}
\caption{Number of tie knots for a particular knot type}
\label{tab:table3}
\end{table}

The reader will have observed that many knots can be tied with 2 or 4 possibilities for a particular number of moves. This is often because there are two ways to finish the tie sequence ($L_oR_iC_oT$ or $R_oL_iC_oT$), and for those knots the knot type is determined by crossings earlier in the tie sequence. On the other hand, some knots ($7_1,6_3,7_4,7_7,8_3,8_9,8_{12}$) have just one tie sequence, as the tie sequence is minimal in some sense.  We will explore all of these ideas further in the next section.

\section{Reducing tie knot sequences}\label{section:reductions}

When any tie sequence is made into a tie knot, there are several natural ways to reduce the number of crossings in the tie knot diagram while maintaining the knot type. An example is shown in \fig{Red-I}. In this section we define these reduction moves, and then apply them to understand some of the underlying structures we noticed about tie knots in the previous section. We finish by discussing how the crossing number of a tie knot is related to the number of tie moves, and observing that all 85 FM-tie knots are alternating.

Recall that we use the tie sequence to describe the crossings in the tie diagram. Thus, we will define the reduction moves in terms of the tie sequence. Fink and Mao~\cite{finkmao} also list some reduction moves, but they did not carefully consider the crossings arising from the tuck move. We have clarified this point and given slightly different reduction moves here.   Recall that in the tie knot diagram the tuck $T$ move corresponds to three crossings $t_1,t_2,t_3$, that $[\dots]$ represents part of a tie sequence, and we include the start and end points $a$ and $b$ of the active strand in our figures.  In what follows, if the direction (in or out) of the tie moves is not listed, then any choice of direction following the FM-assumptions is acceptable.

\begin{figure}[htbp]
\begin{center}
\begin{overpic}[scale=1]{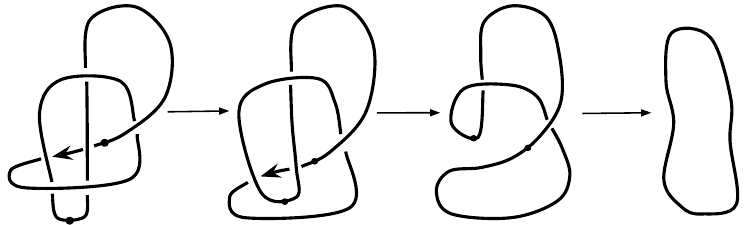}
\put(5,-2){A}
\put(8,11 ){$L_o$}
\put(8.5,6){$R_i$}
\put(19,11){$C_o$}
\put(9,21){$t_1$}
\put(3.2,9.75){$t_2$}
\put(4.25,2.75){$t_3$}
\put(14,12){$a$}
\put(9.5,-2){$b$}
\put(40,-2){B}
\put(36,8.5){$L_o$}
\put(46.5,10){$C_o$}
\put(36.5,20.5){$t_1$}
\put(31,6.5){$t_2$}
\put(41,9.5){$a$}
\put(39,1.5){$b$}
\put(65,-2){C}
\put(74.5,12.5){$C_o$}
\put(62,19.75){$t_1$}
\put(71.5,9.5){$a$}
\put(63,9){$b$}
\put(91,-2){D}
\end{overpic}
\caption{Moving from step A to step D, we see two R2-moves and two R1-moves takes $L_oR_iC_oT$  and reduces it to $\emptyset$, the unknot.}
\label{fig:Red-I}
\end{center}
\end{figure}

\begin{lemma}[Reduction Lemma] The following reductions in a tie sequence do not change the knot type:
\begin{enumerate}
\item[(0)] $L_oR_iC_oT$ reduces to $\emptyset$ (the unknot),
\item[(I)]  $[\dots] L_iR_oL_iC_oT$ reduces to $[\dots] L_iC_ot_1$, and  
$[\dots] R_iL_oR_iC_oT$  reduces to $[\dots] R_iC_ot_1$,
\item[(II)]  $[\dots] C_iR_oL_iC_oT$ and $[\dots] C_iL_oR_iC_oT$ both reduce to $[\dots]$,
\item[(III)]  $f(L,R)$ reduces to $\emptyset$,
\item[(IV)]  $[\dots]Cf(L,R)$ reduces to $[\dots]C\ast$.
\end{enumerate}
Here, $f(L,R)$ represents a sequence of alternating left and right moves (starting in any order), and  $\ast$ represents the first $L$ or $R$ move from $f(L,R)$.
\label{lem:reduction}
\end{lemma}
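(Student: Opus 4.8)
The plan is to prove each of the five reductions by exhibiting an explicit sequence of Reidemeister moves on the tie diagram, organizing the cases so that the hardest reduction is built from the easier ones. The key structural fact I would exploit throughout is the description of crossings given in Section~\ref{tiediagrams}: every $L,R,C$ move corresponds to the active strand crossing over (for an in-move) or under (for an out-move) the passive strand, with the crossings marching monotonically down the straight strand and monotonically around the neck loop, while the tuck $T$ contributes exactly the three crossings $t_1,t_2,t_3$, of which $t_2,t_3$ are the only self-crossings of the active strand. This monotonicity is what guarantees that a strand we wish to slide is unobstructed, so that the Reidemeister moves I claim really are available locally.

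First I would dispatch reduction~(0), which is already done in \fig{Red-I}: the terminal sequence $L_oR_iC_oT$ is, by itself, a closed diagram that collapses to the unknot via two R2-moves (removing the $R_iC_o$ and the $t_2,t_3$ self-clasp) followed by two R1-moves (removing the $t_1$ and $L_o$ kinks), as the figure illustrates step by step from A to D. I would write this up as the base case and then treat reduction~(III) as its immediate generalization: $f(L,R)$ is an alternating sequence of left/right in- and out-moves with no $C$ moves, so the active strand only ever clasps the straight strand, never the neck loop in an essential way; one peels off the crossings two at a time by repeated R2-moves (each adjacent in/out pair of opposite-side moves is a removable clasp), reducing to the empty diagram exactly as in case~(0). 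Reductions~(I) and~(II) are the local versions where a prefix $[\dots]$ is carried along unchanged: for~(I) I would show that the tail $R_oL_iC_oT$ (respectively $L_oR_iC_oT$) sitting after a same-side $L_i$ (respectively $R_i$) can be unclasped by R2-moves to leave just $L_iC_ot_1$ (respectively $R_iC_ot_1$), the point being that the prefix's crossings lie higher on the straight strand and around the neck loop and so do not interfere. For~(II), the extra $C_i$ before the tail means that after the R2-cancellation of the $RLC$ tail against $T$ one is left with a Reidemeister-removable loop that erases the $C_i$ as well, returning exactly $[\dots]$.

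The main obstacle, and the step I would spend the most care on, is reduction~(IV): $[\dots]Cf(L,R)$ reduces to $[\dots]C\ast$, where $\ast$ is just the first move of the alternating tail. This is where \lem{crossing-replace} becomes essential. The difficulty is that the alternating $L/R$ block $f(L,R)$ is attached after a center move $C$, and one cannot simply unclasp it by R2-moves as in~(III), because its first crossing is genuinely linked with the center crossing and must survive. The idea is to use \lem{crossing-replace} to rewrite the crossing immediately following the $C$ so that it is doubled into a matching $RL$ or $LR$ pair; this frees up the interior of the $f(L,R)$ block to be peeled away by R2-moves in pairs, exactly as in~(III), while the lemma guarantees the knot type is unchanged and leaves behind a single surviving crossing $\ast$ that matches the original first move of the tail. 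The care required is in checking the in/out directions: the FM-assumptions force the directions to alternate, and I must verify that the crossing signs produced by \lem{crossing-replace} are consistent with this alternation so that each R2-move I invoke really does act on an oppositely-oriented clasp. I would present~(IV) last, after~(0)--(III), so that the peeling argument can be cited rather than repeated, and I expect the write-up to lean on a figure analogous to \fig{X-move} to make the strand-sliding transparent.
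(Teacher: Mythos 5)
Your overall outline (explicit Reidemeister moves, with reduction 0 as the base case) matches the paper, but two of your mechanisms are genuinely flawed. First, the peeling argument you propose for reduction (III) (and reuse inside (IV)) does not work as stated: by FM-assumption (2) the directions in $f(L,R)$ alternate, so adjacent crossings are over--under with respect to the active strand. That configuration is a \emph{clasp}, not a bigon, and a clasp is exactly what an R2-move cannot remove. The same objection applies to your claim that the ``$t_2,t_3$ self-clasp'' is removed by an R2-move in reduction (0): $t_2$ is an over-crossing and $t_3$ an under-crossing of the active strand with itself, so they are not an R2 pair (in \fig{Red-I} the R2-moves pair $R_i$ with $t_3$ and $L_o$ with $t_2$). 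The paper's proof removes the $f(L,R)$ crossings by \emph{R1-moves}, one at a time: since the endpoints $a$ and $b$ of the active strand join the passive strand with no crossings in between (and, after the reductions, no crossings obstruct the neck loop), the outermost crossing of the alternating block bounds an embedded loop and is a kink; removing it exposes the next one, and so on. This closure-based R1 argument is the missing idea, and without it your (III) and your peeling step in (IV) fail.

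Second, you have misallocated the key lemma. The paper needs \lem{crossing-replace} precisely for reduction (II), not for (IV). In $[\dots]C_iR_oL_iC_oT$ the loop through which the tuck passes is formed by the final moves crossing the \emph{neck loop} and then the straight strand (left of \fig{X-move}), so the reduction-I style untucking by R2-moves is obstructed; your sentence that the ``R2-cancellation of the $RLC$ tail against $T$'' leaves a removable loop erasing the $C_i$ glosses over exactly this obstruction. The paper first applies \lem{crossing-replace} to turn $C_iR_o$ into $C_iL_oR_o$, after which four R2-moves complete reduction (II) (\fig{Red-II}). Conversely, reduction (IV) needs no crossing replacement at all: the trailing moves of $f(L,R)$ beyond the first $\ast$ are the bottommost crossings on the straight strand, adjacent to $b$, and are removed one by one by R1-moves, as in \fig{Red-III}. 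So while the statement you are proving is correct, your proof as written has a gap in (II) and invokes invalid R2-moves in (0), (III) and (IV).
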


Note that the tie sequences found in reductions III and IV only appear after reduction II has been applied to a tie sequence.

\begin{proof} The knot type does not change for any of these reductions, as they can be achieved by using Redeimeister moves on the tie diagram. Reduction 0 corresponds to untucking the base of the tie using two R2-moves, then two R1-moves untwist the $C_o$ and $t_1$ crossings, and is shown in \fig{Red-I}.  For the tie sequences in reduction I, observe that the final loop the tuck passes through is formed by the final two L and R tie moves crossing the straight strand of the tie.  We can re-use \fig{Red-I} steps A to C to illustrate this, with the reduction I move given by two R2-moves. (A reduction I move is also shown on the left in \fig{trefoils-examples}.)

The reduction II move is a little more complicated, since the final loop the tuck passes through is formed when the final two L and R tie moves cross the neck loop then straight strand of the tie. This is shown on the left in Figure~\ref{fig:X-move}. We first make the crossing change described by \lem{crossing-replace}, giving the knot in on the right in Figure~\ref{fig:X-move} and also in \fig{Red-II} step A. After that, two R2-moves are used to go from steps A to B in \fig{Red-II}, then two more R2-moves are used to move from steps B to D. 

Reduction III is shown in \fig{Red-II} step D and follows from R1-moves. Reduction IV corrects a small error in \cite{finkmao} (Equation 19 at the end of Section 9). Reduction IV follows by using R1-moves, and is illustrated in \fig{Red-III} step A. 
\end{proof}

\begin{figure}[htbp]
\begin{center}
\begin{overpic}[scale=1]{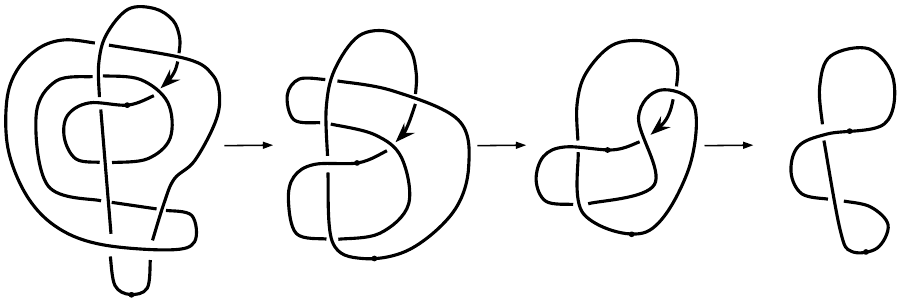}
\put(7, 1){A}
\put(13.75,22.5){$a$}
\put(8.5,19.5){$L_i$}
\put(8.5,16){$R_o$}
\put(16,20.5){$C_i$}
\put(8,25.75){$L_o$}
\put(9,12){$R_o$}
\put(9.5,7){$L_i$}
\put(8.5,29.5){$C_o$}
\put(20.5,27.5){$t_1$}
\put(19,10.75){$t_2$}
\put(17.5,3.75){$t_3$}
\put(13.5,-1.5){$b$}
\put(37, 1){B}
\put(39,16){$a$}
\put(34,13){$L_i$}
\put(33.7,7.75){$R_o$}
\put(45,16.25){$C_i$}
\put(33.5,20.5){$L_o$}
\put(34, 25.5){$C_o$}
\put(47,23){$t_1$}
\put(41,2.5){$b$}
\put(65, 3){C}
\put(67,17.5){$a$}
\put(61.5,15){$L_i$}
\put(61,11.5){$R_o$}
\put(73,16.5){$C_i$}
\put(76,24){$t_1$}
\put(69,5){$b$}
\put(90,3){D}
\put(94,19.5){$a$}
\put(88.5,19){$L_i$}
\put(89.5,12){$R_o$}
\put(96,3){$b$}
\end{overpic}
\caption{Prior to step A, the tie diagram of $L_iR_oC_iR_oL_iC_oT$  is changed to $L_iR_oC_iL_oR_oL_iC_ot_1t_2t_3$ (using \lem{crossing-replace} in Figure~\ref{fig:X-move}). Reduction II takes this to $L_iR_o$ in step D. Finally reduction III takes this to the unknot (not shown).}
\label{fig:Red-II}
\end{center}
\end{figure}
\begin{figure}[htbp]
\begin{center}
\begin{overpic}[scale=1]{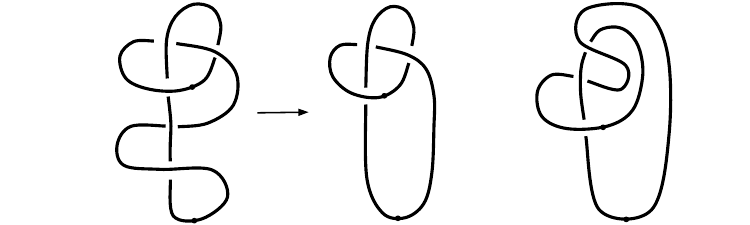}
\put(17, 0){$A$}
\put(25,19.5){$a$}
\put(19, 15.5){$L_i$}
\put(18.75, 25.5){$C_o$}
\put(29.5, 23){$R_i$}
\put(19.25, 11){$L_o$}
\put(19, 5){$R_o$}
\put(27,-1.5){$b$}
\put(45,0){$B$}
\put(50.5,18.25){$a$}
\put(45, 15){$L_i$}
\put(45.5, 25){$C_o$}
\put(55.5, 22.5){$R_i$}
\put(53.5,-1.5){$b$}
\put(75,0){C}
\put(80,14){$a$}
\put(75, 10){$L_i$}
\put(74, 17){$C_o$}
\put(74.55, 22.5){$L_i$}
\put(84,-1.75){$b$}
\end{overpic}
\caption{Reduction IV is applied to reduced tie sequence $L_i C_o R_i L_oR_i$ in step A to get $L_i C_o R_i$ in step B. Steps B and C shows that tie sequences ending  $[\dots]C_oR_i$, or  $[\dots]C_oL_i$ represent the same knot.}
\label{fig:Red-III}
\end{center}
\end{figure}

After applying the Reduction Lemma (\lem{reduction}) to any tie sequence, we obtain the following corollary about {\em reduced tie sequences}. 

\begin{corollary}\label{cor:reduction}
All tie sequences reduce to either
$L[\dots ]C\ast$, $L[\dots ]Ct_1$, or $\emptyset$. Here $\ast$ and $t_1$ are either $L$ or $R$ moves.
\label{cor:Form}
\end{corollary}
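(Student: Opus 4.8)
The plan is to peel the tuck off the end of any tie sequence using exactly one of reductions (0), (I), (II) of \lem{reduction}, and then, in the single case where reduction (II) fires, to clean up the resulting tail with reductions (III) and (IV). First I would record what the FM-assumptions force on a tie sequence: it begins with an $L$ move, its directions strictly alternate between $_i$ and $_o$, no region is used twice in succession, and it terminates in $L_oR_iC_oT$ or $R_oL_iC_oT$. The minimal admissible sequence is $L_oR_iC_oT$ itself, which reduction (0) sends to $\emptyset$; so I may assume there is at least one move before the terminal triple.

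The key combinatorial observation is to examine the move immediately preceding the terminal $L_o$ (respectively $R_o$). By alternation it must be an in-move, and by the no-repeat rule it cannot be on the same side, so it is either $R_i$/$L_i$ or $C_i$, and these two possibilities are exhaustive. In the first case the sequence ends in $R_iL_oR_iC_oT$ or $L_iR_oL_iC_oT$, so reduction (I) rewrites the tail as $\dots C_o t_1$, yielding a sequence of the form $L[\dots]Ct_1$. In the second case the sequence ends in $C_iL_oR_iC_oT$ or $C_iR_oL_iC_oT$, and reduction (II) deletes this whole tail. Since reduction (II) removes only a suffix, what remains still begins with $L$; and because the move just before the deleted $C_i$ is an out-move, the remnant ends in $L_o$ or $R_o$ with no trailing tuck, so none of (0), (I), (II) can fire a second time. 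Now either the remnant contains no $C$, in which case it is a block of strictly alternating $L$ and $R$ moves, namely $f(L,R)$, and reduction (III) collapses it to $\emptyset$; or it contains a $C$, in which case everything after its last $C$ is an alternating $L,R$ block, the remnant matches the pattern $[\dots]Cf(L,R)$ of reduction (IV), and (IV) rewrites it as $[\dots]C\ast$, i.e. $L[\dots]C\ast$. Collecting outcomes gives precisely $\emptyset$, $L[\dots]Ct_1$, or $L[\dots]C\ast$, and each is reduced because no reduction shortens a string whose only moves after the final $C$ amount to a single $L$ or $R$.

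The step I expect to demand the most care is exhaustiveness: checking that the FM-assumptions genuinely force the pre-terminal move to be one of $R_i/L_i$ or $C_i$, so that reductions (I) and (II) really do cover every non-minimal sequence, and that after reduction (II) the remnant is honestly of the form $f(L,R)$ or $[\dots]Cf(L,R)$. These are purely combinatorial consequences of the alternation and no-repeat rules rather than anything topological, but they are where a missed case could hide; for instance, one must use that the remnant begins with $L$ and not $C$ to know that the prefix before its last $C$ is nonempty, so that the output genuinely has the leading-$L$ shape claimed. It is also worth remarking explicitly that reductions (I) and (II) each apply at most once, since both consume the terminal $C_oT$ together with the tuck, which is what guarantees the procedure terminates in the stated normal forms.
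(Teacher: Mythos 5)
Your proposal is correct and follows exactly the paper's approach: the paper's entire proof is the single sentence ``Apply Reductions (0)--(IV) to any tie sequence,'' and your argument is that same strategy with the case analysis made explicit (using the alternation and no-repeat rules to show the pre-terminal move is $R_i$/$L_i$ or $C_i$, so that reduction (I) or (II) always fires, followed by (III)/(IV) cleanup). The exhaustiveness check and the observation that the remnant after reduction (II) still begins with $L$ are details the paper leaves implicit, so your write-up is if anything more careful than the original.
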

\begin{proof} Apply Reductions (0)--(IV) to any tie sequence.
\end{proof}

The following is immediately clear.

\begin{lemma} \label{lem:end-same}
Suppose we have reduced tie sequences matching up until the last tie-move. Then tie sequences ending with $[\dots]C_ot_1$, $[\dots]C_oL_i$ and $[\dots]C_oR_i$ represent equivalent knots. Tie sequences ending $[\dots]C_iL_o$ and $C_iR_o$ also represent equivalent knots. 
\end{lemma}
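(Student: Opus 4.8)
The plan is to read off, from the classification of reduced tie sequences in \cor{reduction}, exactly what ``matching up until the last tie-move'' forces: any reduced sequence ends in a center move $C$ followed by a single $L$ or $R$ move, so two such sequences with a common prefix must share that prefix up to and including the $C$, and differ only in the terminal $L$ or $R$. The entire content of the lemma is therefore that this terminal move -- the very last crossing the active strand makes before running to the endpoint $b$ and closing up along the passive strand -- can be switched between the left and the right region without changing the knot type. I would first dispose of $t_1$: since $t_1$ is by definition whichever of $L_i$ or $R_i$ records where the active strand last crosses the neck loop, the sequence $[\dots]C_ot_1$ \emph{is} literally one of $[\dots]C_oL_i$ or $[\dots]C_oR_i$. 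Thus the three sequences in the first statement collapse to the two cases $[\dots]C_oL_i$ and $[\dots]C_oR_i$, and it suffices to prove those two are equivalent; the second statement is the identical claim for the two out-moves $[\dots]C_iL_o$ and $[\dots]C_iR_o$.

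For the equivalence itself I would isolate the \emph{terminal arc} of the active strand, the portion running from the final $C$ crossing to the endpoint $b$. Because we work with a reduced sequence, there are no further tie-moves after this one, so the terminal arc participates in exactly one crossing with the passive strand -- an over-crossing in the $C_o$ case (an in-move) or an under-crossing in the $C_i$ case (an out-move) -- and is otherwise free of the rest of the diagram. The key step is then to slide this free terminal arc across the top of the diagram from the left region to the right region (or back), an ambient isotopy carrying $[\dots]C_oL_i$ to $[\dots]C_oR_i$ and $[\dots]C_iL_o$ to $[\dots]C_iR_o$. Since the two sequences being compared are both in-moves (respectively both out-moves), the over/under data of that last crossing is the same for both, so the slide needs no crossing change. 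This is precisely the move passing between steps B and C of \fig{Red-III}.

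The one point I expect to need genuine care -- and the reason the statement is nonetheless ``immediately clear'' once it is granted -- is checking that this slide is supported near the terminal crossing and does not sweep the arc across any other strand, so that the common prefix $[\dots]$ is left undisturbed. Here I would invoke the no-unnecessary-crossings conventions of Section~\ref{tiediagrams}: every crossing of the prefix occurs strictly earlier along the active strand and strictly higher along the straight strand and neck loop, so the terminal arc can be lifted clear of all of them before being moved from one region to the other. Granting this localization, the isotopy is local to the final move, the rest of the diagram and hence the shared prefix is unchanged, and the asserted equivalences of knots follow at once.
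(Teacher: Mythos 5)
Your proposal is correct and follows essentially the same route as the paper: collapse $t_1$ to $L_i$ or $R_i$ by its definition, then carry the final crossing from one side of the neck loop to the other by a planar isotopy of the free terminal arc, which is exactly the move between steps B and C of \fig{Red-III} that the paper's proof cites. One detail in your localization argument should be fixed: by the paper's conventions in Section~\ref{tiediagrams}, later crossings sit \emph{farther out} along the neck loop (so the final crossing is the outermost one on its side), not below the prefix crossings as you assert; this corrected ordering is precisely what leaves the path over the top of the neck loop unobstructed, so your conclusion stands.
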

\begin{proof} Recall that $t_1$ corresponds to $L_i$ or $R_i$ (whichever works best). The tie knots are all equivalent, since active strand can leave the neck loop from either the left or the right side, as can be seen by a simple planar isotopy. This is illustrated in \fig{Red-III} steps B and C.
\end{proof}

The last tie move in a reduced tie sequence is the only tie move that can change without changing the knot type.  If an earlier move is changed then the knot type may change as well.  For example, the following tie sequences differ in the third move: $L_iC_oR_iC_oL_iR_oL_iC_oT$ is a $7_5$ knot but $L_iC_oL_iC_oL_iR_oL_iC_oT$ is a $7_1$ knot (See Appendix~\ref{appendix:data} tie knots 38 and 42).  

We can now give a better explanation of the data in Section~\ref{section:data}. We saw that many knots could be tied using just one tie sequence ($7_1,6_3,7_4,7_7,8_3,8_9,8_{12}$).  This is explained by noting these ties only use reduction~I. After this is performed, the tie sequence cannot be further reduced, and for these examples, the knot is represented by a minimal crossing diagram.  We also observed that many knots can be tied in 2 or 4 different ways for a particular number of tie moves.  This can also be explained using reductions. Recall that there are two ways to finish the tie sequence: $L_oR_iC_oT$ or $R_oL_iC_oT$. 
 If we  restrict our attending to tie knots ending in $[\dots]C_iL_oR_iC_oT$ or $[\dots]C_iR_oL_iC_oT$ and apply reduction II, then these tie sequences reduce to $[\dots]$, and hence we get the same tie knot.  At this point we may also be able to apply reduction III or IV. With these reductions, there are two possibilities for the tie sequence of $L$ and $R$ moves (depending on where you start). There are thus four choices of ending for the tie knot made from enough tie moves (two for choice $L$, $R$ sequence and two choice of type of facade).  We will see these ideas in action in Section~\ref{section:examples}  when we discuss the different tie sequences for the unknot, trefoil, twist and $(2,p)$ torus knots. 
Following from \cor{reduction} we can deduce the following results about crossing number of tie knots.

\begin{proposition}\label{prop:numCross}
For all tie knots with $k$ moves in their tie sequence, there is a knot diagram with $k-1$ or fewer crossings.
\end{proposition}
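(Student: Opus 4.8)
The plan is to count the crossings in a tie diagram and then argue that at least one of them can always be removed by a single Reidemeister move, so that a diagram with $k-1$ or fewer crossings exists. First I would count crossings in an unreduced tie diagram with $k$ tie moves. By the discussion in \S\ref{tiediagrams}, the $k$ moves $L, R, C$ contribute a certain number of crossings against the passive strand, while the final tuck $T$ contributes exactly three crossings $t_1, t_2, t_3$. The key observation is that the tuck itself is already counted among the $k$ moves through its relationship to $C_o$ and $t_1$, so I need to be careful about exactly which moves produce crossings. The cleanest route is to observe that in the tie sequence of $k$ moves, the final move is always $C_o$ (by FM-assumption and the backward-counting remark after \fig{fourinhand}), and the tuck $T$ is appended after it.

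The heart of the argument is to invoke \cor{reduction} together with the reductions catalogued in \lem{reduction}. Every tie sequence with $k$ moves reduces to one of the forms $L[\dots]C\ast$, $L[\dots]Ct_1$, or $\emptyset$. The plan is to track how many crossings each reduction eliminates and to show that reduction (0) or reduction (I) always applies at least once, each removing enough crossings to bring the total below $k-1$. Concretely, reduction (0) takes $L_oR_iC_oT$ to the unknot (removing all crossings), and reduction (I) takes $[\dots]L_iR_oL_iC_oT$ to $[\dots]L_iC_ot_1$, which by the R2-moves described in the proof of \lem{reduction} removes two crossings while leaving $t_1$ as a single crossing. Since the tuck contributes three crossings and the reductions always collapse the terminal $R_oL_iC_oT$ or $L_iR_oC_oT$ block, I expect to show that the net crossing count in the reduced diagram is at most $k-1$.

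The main obstacle will be making the crossing bookkeeping uniform across the two families of endings ($L_oR_iC_oT$ versus $R_oL_iC_oT$) and across the cases where reduction II, III, or IV must be applied before the count stabilizes. In particular, I must confirm that applying \lem{crossing-replace} during reduction II does not increase the crossing count beyond what the final bound allows, since that lemma temporarily \emph{adds} a crossing (replacing $CR$ by $CLR$) before the R2-moves remove crossings. The careful step is to verify that the added crossing is always consumed by the subsequent reductions, so that the terminal bound $k-1$ is never violated. Once the crossing accounting for each reduction is pinned down, the bound follows by observing that the reduced diagram has strictly fewer crossings than the number of tie moves, with the worst case $L[\dots]Ct_1$ retaining exactly $k-1$ crossings and all other reduced forms retaining fewer.
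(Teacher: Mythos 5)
Your plan follows the paper's own route---apply the Reduction Lemma (\lem{reduction}) to the ending of the tie sequence and count the crossings that disappear---but the bookkeeping as you state it does not give the bound. The unreduced tie diagram has $k+3$ crossings ($k$ from the $L,R,C$ moves plus the three tuck crossings $t_1,t_2,t_3$), so your opening idea of removing ``at least one'' crossing by a single Reidemeister move can only reach $k+2$; you need to lose at least four. Moreover, your count for reduction I is wrong: it removes \emph{four} crossings, not two. It is realized by two R2-moves, one cancelling the terminal $R_oL_i$ (resp.\ $L_oR_i$) pair against the straight strand and one cancelling $t_2,t_3$, so $[\dots]L_iR_oL_iC_oT$ with $k+3$ crossings becomes $[\dots]L_iC_ot_1$ with $(k-4)+3=k-1$ crossings. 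With only two crossings removed, as you wrote, you would land at $k+1$ and the proposition would not follow.

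Second, the claim that ``reduction (0) or reduction (I) always applies at least once'' is false, and this is exactly where the case structure matters. By the FM-assumptions a tie sequence with $k\ge 4$ moves ends in one of four ways: $L_iR_oL_iC_oT$ or $R_iL_oR_iC_oT$, where reduction I applies and gives $k-1$ crossings as above; or $C_iR_oL_iC_oT$ or $C_iL_oR_iC_oT$, where reduction I does \emph{not} apply and reduction II instead collapses the sequence to $[\dots]$, a diagram with $k-4$ crossings. Your concern that \lem{crossing-replace} temporarily adds a crossing during reduction II is immaterial: the proposition only asks you to exhibit \emph{some} diagram of the knot, so intermediate diagrams may be as large as you like, and one simply compares the original $(k+3)$-crossing diagram with the final $(k-4)$-crossing one. (The remaining case, the $3$-move sequence $L_oR_iC_oT$ itself, is reduction 0 and gives $0\le k-1$ crossings.) Once the reduction I count is corrected and the four-way case split is made explicit, your argument coincides with the paper's proof; reductions III and IV are not needed for this bound.
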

\begin{proof}
Assume we have a tie sequence with $k$ moves. We know this sequence must end in one of four ways. The first two ways are $[\dots ]R_o L_i C_oT$ or $[\dots ]L_o R_i C_oT$. Remembering that $T$ move corresponds to three crossings $t_1,t_2,t_3$, we get  a tie knot diagram with $k+3$ crossings. Using reduction I in \lem{reduction}, we see the tie sequences can be reduced to $[\dots ]C_o t_1$, removing four crossings. This means that the knot diagram has $k-1$ crossings. If the tie sequence ends in $[\dots]C_iL_oR_iC_oT$ or $[\dots]C_iR_oL_iC_oT$, then applying reduction II means the tie sequence reduces to $[\dots]$ and the corresponding knot diagram has $k-4$ crossings.
\end{proof}

\begin{corollary} \label{cor:numCross}
A tie knot with $k$ moves in its tie sequence has crossing number less than or equal to $k-1$.
\end{corollary}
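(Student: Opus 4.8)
The plan is to derive Corollary~\ref{cor:numCross} directly from Proposition~\ref{prop:numCross} by invoking the definition of crossing number. Recall that the \emph{crossing number} of a knot $K$, denoted $c(K)$, is the minimum number of crossings appearing in any knot diagram of $K$, where the minimum is taken over all diagrams representing knots equivalent to $K$. The reductions used throughout Section~\ref{section:reductions} preserve the knot type, so all the diagrams produced there are genuine diagrams of the same tie knot.

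First I would set up the argument by fixing a tie knot $K$ whose tie sequence consists of $k$ moves. By Proposition~\ref{prop:numCross}, there exists a knot diagram $D$ representing $K$ with at most $k-1$ crossings. Next, since $c(K)$ is defined as the minimum over all diagrams of $K$ of the number of crossings, and $D$ is one such diagram, the crossing number $c(K)$ is bounded above by the crossing count of $D$. That is, $c(K) \le (\text{number of crossings of } D) \le k-1$. This chain of inequalities is the entire content of the proof.

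There is essentially no obstacle here, since this corollary is a routine restatement of the proposition in the language of crossing number; the only thing to be careful about is not to overclaim. In particular, I would emphasize that the bound $k-1$ need \emph{not} be the crossing number itself, only an upper bound for it: the reduced diagram produced by Proposition~\ref{prop:numCross} is not guaranteed to be a minimal-crossing diagram. Indeed, the data in Section~\ref{section:data} shows this bound is frequently not sharp, for instance an unknot can be tied with many moves yet has crossing number $0$. Thus the statement to prove is genuinely an inequality and not an equality, and the proof should stop once the inequality is established.

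Concretely, the proof would read: by Proposition~\ref{prop:numCross}, a tie knot $K$ with $k$ moves in its tie sequence admits a knot diagram with at most $k-1$ crossings. Since the crossing number is the minimum number of crossings over all diagrams of $K$, it follows that the crossing number of $K$ is at most $k-1$, as claimed.
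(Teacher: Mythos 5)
Your proof is correct and follows exactly the same route as the paper: invoke Proposition~\ref{prop:numCross} to obtain a diagram with at most $k-1$ crossings, then apply the definition of crossing number as a minimum over all diagrams. Your additional remark that the bound need not be sharp (e.g.\ the unknot tied with many moves) is accurate but not needed for the argument.
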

\begin{proof} Since there is a corresponding knot diagram with $k-1$ or fewer crossings, the crossing number must be less than or equal to $k-1$.
\end{proof}

In addition to these results, we can immediately show the following.
\begin{proposition}
FM-tie knots are alternating.
\label{prop:alternating}
\end{proposition}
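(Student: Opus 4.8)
The plan is to leverage the structural analysis already developed, rather than proving alternation directly from the tie diagram. The key observation is that \prop{numCross} and its corollary tell us that a tie knot with $k \le 9$ moves has crossing number at most $k-1 \le 8$. So every such tie knot is a knot with crossing number at most $8$. First I would invoke this bound to confine the possible knot types to those appearing in the standard knot tables up through $8$ crossings.

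Next I would appeal to the explicit classification carried out in Section~\ref{section:data}: we have already determined the knot type of every one of the $85$ tie sequences, and Table~\ref{tab:table1} lists exactly which knot types occur. The crucial fact is that all of these knot types --- $0_1, 3_1, 4_1, 5_1, 5_2$, the various $6_i$, $7_i$, and $8_i$ knots appearing in the tables --- are \emph{known} to be alternating knots. Indeed, every prime knot with $7$ or fewer crossings is alternating, and among the $8$-crossing prime knots the only non-alternating ones are $8_{19}, 8_{20}, 8_{21}$, none of which appears in our list (see Table~\ref{tab:table1}, where the $8$-crossing entries stop well short of these). Thus the proof reduces to checking against the table of occurring knot types that each one is alternating.

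So the core of the argument is: by \cor{numCross} a tie knot with $k \le 9$ moves has crossing number $\le 8$; the complete enumeration in Appendix~\ref{appendix:data} (summarized in Table~\ref{tab:table1}) exhibits every knot type that actually arises; and each such knot type is alternating, since it either has crossing number at most $7$ (all such prime knots are alternating) or is one of the alternating $8$-crossing knots, the non-alternating $8_{19}, 8_{20}, 8_{21}$ being absent from the list. Combining these gives the proposition.

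The step I expect to be the main obstacle --- or at least the one requiring the most care --- is verifying that the list of occurring knot types genuinely excludes every non-alternating knot of crossing number $\le 8$. This is really a bookkeeping matter resting on the correctness of the exhaustive classification in Section~\ref{section:data}, together with the standard fact about which small-crossing knots fail to be alternating. One subtlety worth flagging is that \cor{numCross} only bounds the crossing number from above; it does not by itself pin down the knot type, which is why the argument must lean on the full enumeration rather than on the crossing bound alone. An alternative, more self-contained route would be to argue directly that each reduced tie diagram is an alternating diagram (which is in fact established later in the paper for the full family), but for the restricted statement here the table-based verification is the most economical.
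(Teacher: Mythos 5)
Your proposal is correct and follows essentially the same route as the paper: bound the crossing number by $k-1\le 8$ via \cor{numCross}, note that every knot of crossing number at most $7$ is alternating and that the only non-alternating $8$-crossing knots are $8_{19}$, $8_{20}$, $8_{21}$, and then check against the classification data that none of these arises from a $9$-move tie sequence. The paper's proof is just a compressed version of this same table-based argument.
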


\begin{proof}
The first non-alternating knot in the knot table found in the Appendix in \cite{adams} has crossing number 8 (it is $8_{19}$). By Corollary~\ref{cor:numCross}, the only way to get a knot with crossing number 8 is a tie sequence with 9 tie moves. However, none of the tie sequences with 9 tie moves  in Appendix~\ref{appendix:data} corresponds to a non-alternating knot. 
\end{proof}

In Section~\ref{section:family} we generalize this result and prove that any tie knot (of any length) is alternating. We also show that the tie diagrams corresponding to reduced tie sequences are reduced knot diagrams. This allows us to conclude in Corollary~\ref{cor:min-crossing} that any tie knot has a minimal crossing diagram corresponding to its reduced tie sequence.

\section{Unknot, trefoil, torus and twist knots}\label{section:examples}
By examining the tie knot sequences corresponding to a particular knot type, we can make some deductions about their form. In particular, we can learn to recognize a knot type from a tie sequence. In this section, we will refer to all of the FM-tie knots that are unknots as {\em FM-tie unknots}. We will similarly refer to FM-tie trefoil knots, FM-tie twist knots and FM-tie $(2,p)$ torus knots.

\subsection{The unknot}
The unknot $L_iR_oC_iR_oL_iC_oT$ shown in Figures~\ref{fig:X-move} and \ref{fig:Red-II} is a typical example of the unknots that appear in our data.

\begin{lemma} \label{lem:unknot}
If a tie sequence is one of the following types: 
\begin{compactitem}
\item $L_oR_iC_oT$,
\item $g(L,R)C_i R_o L_i C_o T$,
\item $g(L,R)C_i L_o R_i C_o T$,
\end{compactitem}
then the corresponding tie knot is an unknot. Here $g(L,R)$ is an alternating sequence of left and right moves starting with a left move. In addition, all of the FM-tie unknots have one of these three tie sequences.
\end{lemma}
\begin{proof} The tie sequence $L_oR_iC_oT$ reduces to $\emptyset$ by reduction 0, as shown previously in Figure~\ref{fig:Red-I}. We know tie sequences of the second and third form are reduced to $\emptyset$ by using reductions II and III. Thus the corresponding tie knots are all unknots. 

By examining all the FM-tie unknots in Appendix~\ref{appendix:data}, we can see that the only way unknots appear is when their tie sequence is one of the three listed above.
\end{proof}

This lemma also clarifies why we see the  first line in Table~\ref{tab:table2}. Aside from the single unknot made from 3 tie moves, the the other unknots have 5 or more tie moves. There are two unknots for each number of moves, corresponding to the two different ways to tie a flat facade.

Examples of known tie knots that are unknots include: Oriental, Nicky, Pratt, half-Windsor, and St. Andrew tie knots.


\subsection{The trefoil knot}
\begin{lemma}\label{lem:trefoil} If a tie sequence reduces to $LC\ast$ (where $\ast$ is a $L$ or $R$ move), then the corresponding tie knot is a trefoil knot. In addition, all of the FM-tie trefoil knots have a reduced tie sequence of the form $LC\ast$.
\end{lemma}

\begin{proof} If the reduced tie sequence is $LC\ast$ then, by FM-assumptions, there are three alternating crossings in the tie diagram. The arrangement of the crossings shows that the knot must be a trefoil knot.

By examining the tie sequences in Appendix~\ref{appendix:data} corresponding to the trefoil knot, we see there are only three kinds:
\begin{compactitem}
\item $L_iR_oL_iC_oT$,
\item $LCf(LR)C_i R_o L_i C_o T$,
\item $LCf(L,R)C_i L_o R_i C_o T$,
\end{compactitem}
where $f(L,R)$ is a sequence of alternating $L$ and $R$ moves starting in any order. The last two tie sequences start with $L_i$ if there an even number of tie moves in $f(L,R)$, and they start with $L_o$ if there are an odd number.

Now $L_iR_oL_iC_oT$ reduces to $L_iC_ot_1$ by reduction I, as  shown in \fig{trefoils-examples} left. (Recall that $t_1$ is either a $L$ or $R$ move.) The other two tie sequences  both reduce to $LCf(L,R)$ by reduction II. Reduction IV then takes these to $LC\ast$.
\end{proof}

\begin{figure}[htbp]
\begin{center}
\begin{overpic}[scale=1]{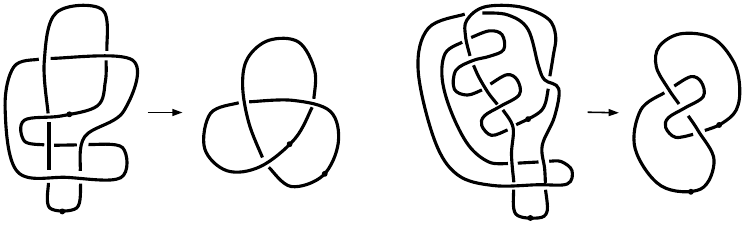}
\put(8.5,15.5){$a$}
\put(3.25,15.25){$L_i$}
\put(2.25,8){$R_o$}
\put(3,3.5){$L_i$}
\put(1.75,23){$C_o$}
\put(14.75,23.5){$t_1$}
\put(9,11.75){\small{$t_2$}}
\put(11.25,7){$t_3$}
\put(7.5,-1.25){$b$}
\put(36,11){$a$}
\put(34,5){$L_i$}
\put(28.25,17.25){$C_o$}
\put(42.5,16.5){$t_1$}
\put(43.75,5){$b$}
\put(69.5,11.75){\small$a$}
\put(65.25, 9.75){\scriptsize{$L_o$}}
\put(68, 14.75){\scriptsize{$C_i$}}
\put(62, 15){\footnotesize{$L_o$}}
\put(63.85, 20){\footnotesize{$C_i$}}
\put(63, 23){\scriptsize{$L_0$}}
\put(65,6){\scriptsize$R_o$}
\put(65.5,2.5){\small$L_i$}
\put(59.5,28.75){\small$C_o$}
\put(74,19.5){\small$t_1$}
\put(72.75,9.5){\small$t_2$}
\put(73.25,2.75){\small$t_3$}
\put(70,-2){$b$}
\put(95,11){\small$a$}
\put(91.5,9){\small{$L_o$}}
\put(92.4,14.5){\footnotesize{$C_i$}}
\put(85,18){\small{$L_o$}}
\put(91,1.5){$b$}
\end{overpic}
\caption{The left two pictures shows the tie knot $L_iR_oL_iC_oT$ is a right handed trefoil knot. The right two pictures shows the tie knot $L_oC_iL_oC_iR_oL_iC_oT$ is a left handed trefoil knot. Here, $L_oC_iL_oC_iR_oL_iC_oT$ was first changed to $L_oC_iL_oC_iL_oR_oL_iC_oT$ using \lem{crossing-replace}.  }
\label{fig:trefoils-examples}
\end{center}
\end{figure}

\begin{corollary} Both left and right handed trefoil knots are tie knots.
\end{corollary}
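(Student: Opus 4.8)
The plan is to exhibit, for each chirality, an explicit tie sequence whose reduced form is $LC\ast$ and which therefore gives a trefoil by \lem{trefoil}, then to verify that the two sequences produce trefoils of opposite handedness. By \lem{trefoil}, any tie knot whose reduced sequence is $LC\ast$ is a trefoil, so the work is entirely in distinguishing the left-handed from the right-handed one. The most natural candidate to start from is the four-in-hand sequence $L_iR_oL_iC_oT$, which the proof of \lem{trefoil} already shows reduces to $L_iC_ot_1$, i.e. to $LC\ast$. This gives one trefoil; I then need a second sequence reducing to $LC\ast$ whose mirror image it is.

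First I would recall that by the convention fixed in \sect{tieknots}, all tie sequences considered here begin by wrapping the active strand to the \emph{left}, and that wrapping to the right produces the mirror image. The subscripts $_i$ and $_o$ encode over- versus under-crossings (an in-move is an over-crossing, an out-move an under-crossing), so within the left-handed family the handedness of the resulting trefoil is governed by whether the reduced diagram $LC\ast$ has all-positive or all-negative crossings. Concretely, I would compute the sign of each of the three crossings in the reduced $LC\ast$ diagram using the orientation induced by the tie sequence (the active strand is oriented from $a$ to $b$). Switching the facade type between the two endings $L_oR_iC_oT$ and $R_oL_iC_oT$ reverses the over/under pattern of the final clasp crossings, and this is exactly the operation that flips all three crossing signs, yielding the opposite trefoil.

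The cleanest way to carry this out is to take the two short trefoil sequences of the form appearing in the proof of \lem{trefoil}: the sequence $L_iR_oL_iC_oT$ reducing to $L_iC_ot_1$, and a companion sequence that reduces to $L_iC_o$ with the opposite in/out pattern on the $C$ and $t_1$ crossings. Reading off the three crossings of each reduced diagram with their signs shows one is the $3_1$ with writhe $+3$ (up to the table's convention) and the other with writhe $-3$; since the trefoil is chiral, these are the left- and right-handed trefoils respectively. Because both reduced sequences have the form $LC\ast$, \lem{trefoil} confirms both are trefoils, and the sign computation confirms they are distinct.

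The main obstacle I expect is bookkeeping of crossing signs rather than any topological subtlety: I must orient the reduced three-crossing diagram consistently with the active-strand orientation and correctly translate each $_i$/$_o$ label into a signed crossing, taking care that the final clasp crossings $t_1,t_2,t_3$ (and their survivors after reduction) are assigned signs matching the physical over/under data of \fig{tie-diagram}. Once the orientation convention is pinned down this is a finite check on three crossings, but it is the step where an error would silently collapse the two cases into the same handedness, so I would double-check it by confirming that the two sequences are genuinely related by the left/right wrapping reflection described in \sect{tieknots}.
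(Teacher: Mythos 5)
Your overall strategy---exhibit two sequences reducing to $LC\ast$ whose reduced diagrams have opposite over/under patterns, then invoke \lem{trefoil} together with the chirality of the trefoil---is the right one, and it is essentially what the paper does. But the mechanism you propose for producing the mirror partner is wrong. Switching the facade ending between $[\dots]C_iL_oR_iC_oT$ and $[\dots]C_iR_oL_iC_oT$ does \emph{not} flip any crossing signs: by reduction II both endings are erased entirely, leaving the identical reduced sequence $[\dots]$, so the two facades give the same knot with the same handedness (this is exactly the content of \lem{end-same} and the discussion following it, and it is why trefoils come in groups of four in Table~\ref{tab:table2}). Your fallback description of the companion---a sequence that ``reduces to $L_iC_o$ with the opposite in/out pattern on the $C$ and $t_1$ crossings''---is also not the mirror: flipping only two of the three crossings neither yields the mirror image (which requires all three crossings switched) nor even a legal reduced tie diagram, since the result would fail to alternate.

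The data that actually controls chirality is the direction of the \emph{first} move, equivalently the parity of the number of tie moves: a sequence starting $L_i$ reduces to $L_iC_o\ast_i$ (over--under--over along the active strand), while a sequence starting $L_o$ reduces to $L_oC_i\ast_o$, the diagram with all three crossings switched, hence the mirror trefoil. This is the paper's proof: $L_iR_oL_iC_oT$, and the sequences of \lem{trefoil} with $f(L,R)$ of even length, give right trefoils, while the sequences with $f(L,R)$ of odd length, such as $L_oC_iL_oC_iR_oL_iC_oT$ in \fig{trefoils-examples}, give left trefoils. Your argument can be repaired by replacing ``switch the facade'' with ``change the parity of $f(L,R)$''; as written, both of your candidate sequences would be right-handed, and the claim that the left trefoil is a tie knot would remain unproved.
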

\begin{proof} Right handed trefoil knots are given by the tie sequences in \lem{trefoil} starting with $L_i$. (That is both $L_iR_oL_iC_oT$, and the other two tie sequenes with an even number of $f(L,R)$ moves.) Left handed trefoil knots are given by tie sequences in \lem{trefoil} starting with $L_o$, that is those with an odd number of $f(L,R)$ moves.
\end{proof}

In \fig{trefoils-examples}, we see both left and right haded trefoil knots shown as a tie knot and then as reduced to an minimal-crossing, alternating knot diagram.  This discussion also allow us to fully understand the numbers of trefoil knots that we see in line 2 of Table~\ref{tab:table2}. There is just one trefoil knot with 4 tie moves. Since we are using reductions II and IV, the rest of the trefoils have 7 or more tie moves. Moreover, there are 4 tie sequences for each number of tie moves. (Two for the choice of start move for $f(L,R)$ sequence, then two for the choice of ways of tying the flat facade.)

Examples of known tie knots that are trefoil knots include: four-in-hand, Plattsburgh, and Windsor tie knots. Of these three, the Plattsburgh is left-handed, the other two are right-handed.

\subsection{Twist knots}

A twist knot $T_n$ is characterized by $n$ twists and a clasp, as shown in \fig{exTwistTorus}. By examining the tie sequences, we can see three distinct ways twist knots can be created.

\begin{lemma}\label{lem:twist}
If a tie sequence reduces to one of the following types:
\begin{compactenum}
\item $g(L,R)C_0\ast$, 
\item $LRC(RCRC\dots RC)\ast$, 
\item $LC(RCRC\dots RC)\ast$,
\end{compactenum}
then the corresponding tie knot is a twist knot. 
Here, $\ast$  is a $L$ or $R$ move, $g(L,R)$ is a sequence of alternating $L$ and $R$ moves starting with a $L$ move, and $RCRC\dots RC$ is a sequence of an even number of alternating $R$ and $C$ moves starting with a $R$ move. In addition, all of the FM-tie twist knots have one of these reduced tie sequences.
\end{lemma}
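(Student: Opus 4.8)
The plan is to prove both directions by relating each of the three reduced tie sequences to the standard twist-knot picture of \fig{exTwistTorus}: a column of $n$ twists (two strands twisting around each other) closed off by a clasp contributing two more crossings. The key structural observation I want to exploit is that in a reduced tie diagram, a $C$ move crosses the neck loop while $L$ and $R$ moves cross the straight strand, so a block of the form $RCRC\dots RC$ or $LCLC\dots$ produces a run of crossings that alternately catch the active strand on the straight strand and then on the neck loop — exactly the pattern of a two-strand twist region. The final $C\ast$ (or the clasp $t_1 t_2 t_3$ that survives reduction I) then plays the role of the clasp. So the whole lemma is really a dictionary between the combinatorics of the tie sequence and the two geometric ingredients (twist region + clasp) that define $T_n$.

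For the ``if'' direction I would handle the three forms separately. Form (1), $g(L,R)C_0\ast$, is the sequence where the active strand makes a block of alternating $L,R$ crossings all on the straight strand, then a single $C$ and a final move; I would draw this diagram and identify the alternating $LRLR\dots$ block with a twist region of $n$ crossings and the $C_o\ast$ ending with the clasp, using \lem{end-same} to see that the two choices of $\ast$ give the same knot. Forms (2) and (3) differ only in how the twist region is initialized ($LR$ versus a single $L$ before the first $C$), and in both the repeated block $RCRC\dots RC$ should be shown to give a twist region whose number of crossings equals the number of $RC$ pairs. The cleanest route is probably an inductive argument on the length of the $RCRC\dots RC$ block: adding one more $RC$ pair inserts exactly one more crossing into the twist region without disturbing the clasp, which I would justify by a local picture plus a planar isotopy (the same kind of move used in the proof of \lem{crossing-replace}). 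I would also check the parity/handedness bookkeeping so that the in/out subscripts forced by the FM-assumptions really do produce alternating crossings, matching the definition of a twist knot as an alternating knot.

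For the ``only if'' direction, the honest approach mirrors the earlier lemmas (\lem{unknot}, \lem{trefoil}): invoke \cor{reduction} to say every tie sequence reduces to $L[\dots]C\ast$, $L[\dots]C t_1$, or $\emptyset$, and then argue that the reduced diagram is a twist knot exactly when the middle $[\dots]$ has one of the three permitted shapes. Since the paper establishes in Section~\ref{section:family} that tie diagrams are alternating with no nugatory crossings, the reduced crossing count is the crossing number, so I can argue that a reduced tie sequence yields a twist knot only if its crossings organize into a single twist column plus a clasp — and then classify which reduced words $[\dots]$ produce that shape, obtaining precisely forms (1)--(3). Realistically, though, the cleanest and most likely intended completion is the same empirical closure used for the unknot and trefoil: having verified the three forms give twist knots, observe that inspection of all twist-knot tie sequences in Appendix~\ref{appendix:data} shows these are the only forms that occur.

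The main obstacle I expect is the ``only if'' direction stated at full generality (ties of arbitrary length), not the verification that the three forms work. Identifying a given alternating reduced diagram as a twist knot requires ruling out every other way an alternating $C$-and-$L/R$ pattern could close up, and in particular distinguishing twist knots from the $(2,p)$ torus knots, which arise from superficially similar repeated blocks (indeed the trefoil sits in both families). The crux is therefore pinning down exactly how a $C$ move versus an $L$/$R$ move routes the strand, so that a block like $RCRC\dots$ is forced to build a twist region rather than a torus-style braid; getting that local geometric claim right — and confirming the parities so the clasp orientation is consistent — is where the real work lies.
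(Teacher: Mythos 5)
Your proposal follows essentially the same route as the paper: the ``if'' direction is proved by identifying, for each of the three forms, which crossings give the $n$ twists and which give the clasp (the paper does this with explicit figures rather than your suggested induction on the $RC$-block length), and the ``only if'' direction is settled exactly by the empirical closure you anticipated, namely inspection of all twist-knot tie sequences in Appendix~\ref{appendix:data} --- the lemma lives under the FM-assumptions (at most 9 moves), and the paper itself later warns that it may need to change for longer ties. The only cosmetic difference is the assignment of roles in types 2 and 3: the paper takes the clasp to be the \emph{initial} $L,R$ (resp.\ $L,C$) crossings and the trailing $RCRC\dots RC\,\ast$ block to be the twists, rather than your reading in which the final $C\ast$ is the clasp.
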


\begin{proof} By examining the tie sequences of twist knots in Appendix~\ref{appendix:data}, we see that the reduced tie sequences listed above are the only ones that appear. Type~1 occurs in all the twist knot types: $3_1, 4_1, 5_2, 6_1, 7_2$ and $8_1$. Type 2 occurs only in twist knots with an even number of twists: $4_1, 6_1$ and $8_1.$
Type 3 occurs only in twists knots with an odd number of twists: $3_1, 5_2$ and~$7_2$. In Appendix~\ref{appendix:data}, we have listed the twists knots and their types.

\begin{figure}[htbp]
\begin{center}
\begin{overpic}[scale=1]{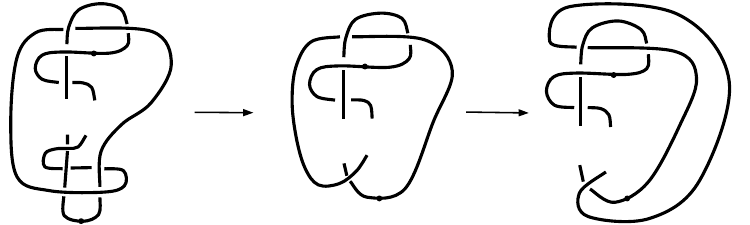}
\put(11.75,23.75){$a$}
\put(9.25,20.25){$L_i$}
\put(5, 16.5){$R_o$}
\put(6, 11){$L_i$}
\put(5.25,5.25){\small{$R_o$}}
\put(5.5, 2){$L_i$}
\put(5.5,27){$C_o$}
\put(17.25,27){$t_1$}
\put(13.75, 8.5){$t_2$}
\put(13.75, 2){$t_3$}
\put(10,13){$\vdots$}
\put(9.5,-2.25){$b$}
\put(48,22){$a$}
\put(46.25,18.25){$L_i$}
\put(42,14){$R_o$}
\put(43.25,6.5){$L_i$}
\put(42.5,26){$C_o$}
\put(54.5,25.75){$t_1$}
\put(47,10){$\vdots$}
\put(49,1){$b$}
\put(81,20.85){$a$}
\put(78,17.5){$L_i$}
\put(73.5,13){$R_o$}
\put(74.5,5){$L_i$}
\put(74,24.75){$C_o$}
\put(86.5, 24.5){$t_1$}
\put(79,9){$\vdots$}
\put(83.75,1.25){$b$}
\end{overpic}
\caption{ On the left, a tie knot that after reduction I gives a type 1 twist knot. The latter is shown in the middle and right.}
\label{fig:TwistKnot-v1}
\end{center}
\end{figure}

Now assume a tie sequence reduces to type 1. Then the $g(L,R)$ gives the $n$ twists, and the $C_o$ and $\ast$ crossings form the clasp. This is illustrated in \fig{TwistKnot-v1}.
For type 2, the first $L$ and $R$ crossings form the clasp, and the rest of the sequence gives the $n$ twists. This is illustrated in \fig{TwistKnot-v2}. Moreover, by construction there is an even number of twists.
 For type 3, the first $L$ and $C$ crossings form the clasp, and the rest of the sequence gives the $n$ twists. This is illustrated in \fig{TwistKnot-v3}. Again we can see that by construction there is an odd number of twists.
\end{proof}

\begin{figure}[htbp]
\begin{center}
\begin{overpic}[scale=1]{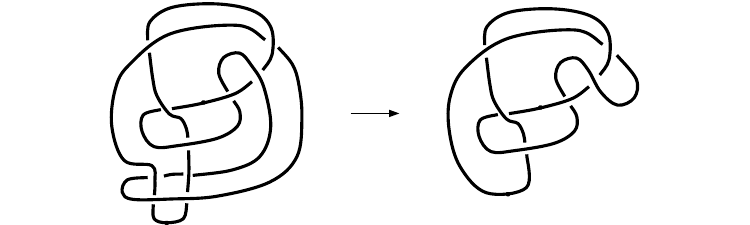}
\put(26,17){$a$}
\put(18,16){$L_o$}
\put(21.5, 11.5){$R_i$}
\put(28, 14){$C_o$}
\put(32.3, 16){$R_i$}
\put(25.5, 7.75){$L_o$}
\put(25, 0.75){$R_i$}
\put(36.75, 24){$C_o$}
\put(17.5, 24.25){$t_1$}
\put(20.75, 7.5){\small$t_2$}
\put(18, 0.75){$t_3$}
\put(21.5, -2.25){$b$}
%
\put(70,16){$a$}
\put(63,15.25){$L_o$}
\put(66.75, 10.75){$R_i$}
\put(73, 13.5){$C_o$}
\put(77.5, 15.5){$R_i$}
\put(81.5, 23.25){$C_o$}
\put(62.4, 23.85){$t_1$}
\put(67, 1.25){$b$}
\end{overpic}
\caption{On the left, tie knot $L_oR_iC_oR_iL_oR_iC_oT $, which reduces to $L_oR_iC_oR_iC_ot_1$ on the right. This is a  type 2 twist knot. }
\label{fig:TwistKnot-v2}
\end{center}
\end{figure}
\begin{figure}[htbp]
\begin{center}
\begin{overpic}[scale=0.9]{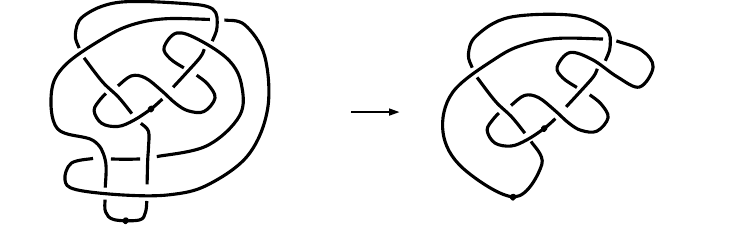}
\put(18,16){$a$}
\put(16.5, 10.5){$L_i$}
\put(13, 20){\small$C_o$}
\put(20.75, 13.75){$R_i$}
\put(24, 17){$C_o$}
\put(27, 20.2){$R_i$}
\put(20, 6){$L_o$}
\put(20, 1){$R_i$}
\put(28.75, 28.25){$C_o$}
\put(7.5, 23.25){$t_1$}
\put(11.5, 6.25){\small$t_2$}
\put(11.25, 1.5){$t_3$}
\put(16, -2.5){$b$}
%
\put(70.5,13.5){$a$}
\put(68.75, 8){$L_i$}
\put(65.5, 17.25){\small$C_o$}
\put(73.25, 11){$R_i$}
\put(76.5, 14.5){$C_o$}
\put(79.25, 17.25){$R_i$}
\put(81.5, 25){$C_o$}
\put(60.25, 20.5){$t_1$}
\put(67, 0.75){$b$}
\end{overpic}
\caption{On the left, tie knot $L_iC_oR_iC_oR_iL_oR_iC_oT $, which reduces to $L_iC_oR_iC_oR_iC_ot_1$. This is a type 3 twist knot.}
\label{fig:TwistKnot-v3}
\end{center}
\end{figure}

We can again use \lem{reduction} to understand the number of tie sequences that appear in Table~\ref{tab:table2}. Once the twist knot has been established at the start of the tie sequence, the rest of the sequence is reduced in two ways. The first way is to just use reduction I, the second way uses reduction II (and sometimes~IV). The second way automatically gives 4 tie sequences: two for the choice of way to tying the flat facade (taking up the last four spots), and two for the choice of order of sequence of alternating $L$ and $R$ just before that. Thus the second way requires at least an extra 4 moves after the twist knot is established, so we only see this for $3_1, 4_1$ and $5_1$ twist knots.

Let's go through Table~\ref{tab:table2} carefully. We have already discussed the trefoil $3_1$. However, note that the single tie sequence with 4 tie moves gives a type 1 twist knot, and all the other longer sequences use reductions II and IV to give a type 3 twist knot. Similarly, for $4_1$, the single tie sequence with 5 tie moves gives a type 1, and the rest use reductions II and IV to give a type 2 twist knot. For $5_2$, there are two tie sequences with 6 tie moves: one of type 1 and one  of type 3. The other tie sequences with 9 tie moves use reduction II  to give four of type 1 and four of type 3 twist knots. For $6_1, 7_2$ and $8_1$, the only reduction used is reduction I. Thus for $6_1$ and $8_1$  there is one tie sequence of type 1 and one sequence of type 2. For $7_2$ there is one tie sequence of type 1 and one of type 3. 
 
Examples of tie knots which are twist knots include: Kelvin, Hanover, Victoria, and Balthus tie knots. The Kelvin and Hanover  are figure-eight knots, while the Victoria and Balthus are both $5_2$ knots.

\subsection{$(2,p)$ torus knots}

Recall that the $(2,p)$ torus knots (with $p$ odd) are the knots which can be embedded on a torus, and which wrap twice around the longitude and $p$ times around the meridian. The $(2,5)$ torus knot is illustrated in \fig{exTwistTorus}. Note that in any standard knot table the $(2,p)$ torus knots are traditionally given the label $p_1$. For example, the $(2,5)$ torus knot is $5_1$.

\begin{lemma}\label{lem:torus}
If a tie sequence reduces to $LC(LCLC\dots LC)\ast$, then the corresponding tie knot is a $(2,p)$ torus knot. Here, $p$ is the number of tie moves in the reduced tie sequence, and $\ast$ is $L$, $R$, or $t_1$. In addition, all of the FM-tie $(2,p)$ torus knots have a reduced tie sequence of the form $LC(LCLC\dots LC)\ast$.
\end{lemma}
\begin{proof} The sequence $LC(LCLC\dots LC)\ast$ gives an alternating knot diagram with $p$ twists occurring between the active strand and the neck loop (as shown in \fig{torus-examples}). The tie construction means that the resulting knot is a $(2,p)$ torus knot.  An examination of the tie sequences for $3_1$, $5_1$ and $7_1$ knots in Appendix~\ref{appendix:data}  shows that this is the only reduced tie sequence that appears.  
\end{proof}

\begin{figure}[htbp]
\begin{center}
\begin{overpic}[scale=1]{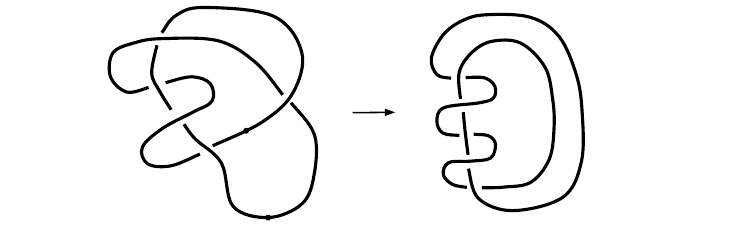}
\put(31,13.25){$a$}
\put(26.5,7){$L_o$}
\put(22,11){$C_i$}
\put(19,15.5){$L_o$}
\put(17.25,21.75){$C_i$}
\put(39.5,16){$R_o/t_1$}
\put(35,-1.75){$b$}
\end{overpic}
\end{center}
\caption{The sequences $L_oC_iL_oC_iR_oC_iL_oR_iC_oT$ and $L_iC_oL_iR_oL_iC_oT$ both reduce to $LCLC\ast$, a $(2,5)$ torus knot.}
\label{fig:torus-examples}
\end{figure}

The data in Table~\ref{tab:table2} is now clear. As with the other knot types, the number of tie sequences that appears depends on whether reduction I is used, or whether reductions II and IV are needed. The $3_1$ knot has been discussed extensively above. For $5_1$, there is one tie sequence with 6 tie moves, and 4 tie sequences with 9 tie moves. For $7_1$ we see there is just the (expected) single tie sequence.

We observe that none of the commonly known tie knots are $(2,p)$ tours knots. Instead, we list the remaining familiar named tie knots with their knot types: Cavendish $7_4$, Christensen $7_7$, and Granchester $8_4$. 

The 85 FM-tie knots were an inspiration for the results in this section. However, as the number of tie moves increases, it is entirely possible that there may be other tie sequences that appear for the knot types discussed in this section.  Such an example might be found for the twist knot family. We conjecture that Lemmas~\ref{lem:unknot}, \ref{lem:trefoil}, and \ref{lem:torus} give the only possible tie sequences for the unknot, trefoil and $(2,p)$ torus knots. 

\section{Further properties of tie knots}\label{section:family}

In the study of knot theory, it is common to study properties of families of knots. For example, properties of alternating knots, twists knots, and torus knots. In this paper we have introduced the family of tie knots, and in this section we will deduce further properties of these knots.

Many properties of this family follow immediately from the work in the previous sections.  If we take a close look at the tie knots of the $0_1, 3_1, 4_1, 5_1$ and $5_2$ knots, we notice that once a knot appears as a tie knot, it will appear infinitely often as the number of tie moves increases. This can be generalized to any tie knot.

\begin{proposition}\label{prop:infinitely-often}
 Suppose a knot appears as a tie knot with $n$ tie moves.
 \begin{enumerate}
 \item If the tie sequence ends in $L_i R_o L_i C_o T$, or $R_i L_o R_i C_o T$, then this knot appears again as a tie knot with $n+3$, $n+5$, $n+7$, \dots tie moves. 
 \item If the tie sequence ends in $C_i R_o L_i C_o T$, or $C_i L_o R_i C_o T$, then this knot appears again as a tie knot with $n+2$, $n+4$, $n+6$, \dots tie moves. 
\end{enumerate} 
Moreover, for the tie sequences with $n+2, n+3,$ or more tie moves, there are (at least) four different tie sequences giving the knot type.

\end{proposition}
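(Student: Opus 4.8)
The plan is to run the reductions of \lem{reduction} backwards. Each reduction there is proved by Reidemeister moves, so it is a genuine equivalence and may be read in either direction: from a short tie sequence for a knot $K$ I can manufacture longer valid tie sequences that still represent $K$. The two devices I would reverse are reduction~II, whose reverse re-caps an open diagram with a fresh clasp-and-tuck block $C_iR_oL_iC_oT$ (or $C_iL_oR_iC_oT$) without changing the knot, and reduction~IV, whose reverse replaces a terminal move following a $C$ by a longer alternating block $f(L,R)$ beginning with that move. Physically these reversals just correspond to continuing to wrap and then tuck a longer tie, so the manufactured diagrams are again honest tie diagrams; the content of the proof is the move-counting and the verification that every manufactured sequence still obeys the FM-assumptions.

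For part~(1) I would start from $S=[\dots]L_iR_oL_iC_oT$ and apply reduction~I to expose $K$ in the reduced form $[\dots]L_iC_o\ast$, reading the leftover $t_1$ as a terminal $L$ or $R$ move via \lem{end-same}. I then expand $\ast$ into an alternating block $f(L,R)$ of length $j$ (reverse of reduction~IV) and re-cap with $C_iR_oL_iC_oT$ (reverse of reduction~II), producing the valid sequence $[\dots]L_iC_o\,f(L,R)\,C_iR_oL_iC_oT$. By construction reduction~II strips the new cap and reduction~IV collapses the block, returning the diagram to $[\dots]L_iC_o\ast$, so the knot type is unchanged. Counting moves gives length $n+2+j$; taking $j=1$ yields the first reappearance at $n+3$, and stepping $j$ up by $2$ at a time -- which extends the block while preserving its first move, its alternation, and the parity of the total length (hence the facade type) -- yields exactly $n+3,n+5,n+7,\dots$. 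Part~(2) is the same construction starting from $S=[\dots]C_iR_oL_iC_oT$, where reduction~II already exposes $K$ and the alternating block is present from the outset with some length $j_0\ge 1$; extending that block by $2,4,6,\dots$ moves while keeping the existing cap then gives the even progression $n+2,n+4,n+6,\dots$, the cheaper starting offset reflecting the fact that here no new cap must be attached.

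For the ``four sequences'' claim I would observe that once $n+3$ (respectively $n+2$) moves are available there are two independent binary choices in the re-capping. First, the cap may be taken as $C_iR_oL_iC_oT$ or $C_iL_oR_iC_oT$; reduction~II applies to both, so either cap returns $K$, and these are the two ways of tying the flat facade. Second, by \lem{crossing-replace} and \lem{end-same} the terminal move $\ast$ may be taken to be either $L$ or $R$, so the inserted block $f(L,R)$ may be started with either letter, and in each case reduction~IV collapses $Cf(L,R)$ to the same $C\ast$. The resulting $2\times 2$ lists differ either in the region of the first inserted move or in the direction pattern of the terminal facade, so they are four genuinely distinct tie sequences all representing $K$, as claimed.

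The step I expect to be the main obstacle is the parity bookkeeping. Because the in- and out-directions are not free but are forced by the rule that a valid sequence alternates and ends in $C_o$ (so that even-length sequences begin $L_i$ and odd-length ones begin $L_o$), I must check that each manufactured region word admits a consistent direction labelling with the required $C_i\dots C_o$ ending, that no region is repeated at the splice points, and that it is precisely the insistence on keeping the facade type fixed as the block grows that pins the increment to a single residue class modulo $2$ -- giving the odd progression in part~(1) and the even one in part~(2). Verifying this, together with the pairwise distinctness of the four sequences, is where the care is needed; the underlying knot-equivalence is already guaranteed by \lem{reduction} and \cor{reduction}.
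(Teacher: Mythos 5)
Your proof follows essentially the same route as the paper's: run the reductions in reverse, in case (1) applying reduction I and then expanding the terminal $\ast$ into an odd-length alternating $L$/$R$ block capped by $C_iR_oL_iC_oT$ or $C_iL_oR_iC_oT$, in case (2) inserting alternating pairs before the existing cap, with knot-type preservation seen by applying reductions I/II and then III/IV, and with the four sequences coming from the two cap choices times the two choices of starting letter for the block. The parity bookkeeping you flag at the end (odd blocks forced in case (1), even insertions in case (2)) is exactly how the paper pins down the increments, so the proposal matches the paper's argument.
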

 
\begin{proof}
For Case 1, we use reduction I to simplify the tie sequence. This gives us a tie diagram with $n-1$ crossings by Corollary~\ref{cor:reduction}, and the reduced tie sequence ends $[\dots]C_o\ast$, where $\ast$ is $L$ or $R$. Change the $\ast$ to a sequence of alternating $L$ and $R$ moves of odd length (starting in either order), then add on either $C_i R_o L_i C_o T$, or $C_i L_o R_i C_o T$ to the end. Altogether, we get a new tie sequence of the original tie knot with $n-1+2k+4=n+3+2k$ tie moves (where $k=0,1,2,\dots$). 

For Case 2, we know that the tie sequence must have a $L_o$ (or $R_o$) before the final 4 moves. Simply add an even number of $R_oL_o$'s after the $L_o$ (or $L_oR_o$'s after the $R_o$) before the final 4 moves. This gives a tie sequence with $n+2$, $n+4$, $n+6$, \dots tie moves. 

In both cases, there are 4 possible tie knots of a particular length: there are two choices of alternating sequences  of $L$ and $R$ (depending on the starting move), and two choices of flat facade. The new tie sequences also have the same knot type as the original knot, as can be seen by using reductions I or II, then III or IV.
\end{proof}

While \prop{infinitely-often} guarantees that tie knot types will appear infinitely often as the number of tie moves increases, it is entirely possible there may be other, different tie sequences for a knot that appear as the number of tie moves increases.  As discussed above, it is possible that the twist knots may have a fourth (or fifth, or more) reduced tie sequence when there are a greater number of tie moves.

We also observed that the left and right trefoil knots appeared in our list of tie knots. These knots are mirror images\footnote{Recall that to find the mirror image of a knot, we take a knot diagram, then switch all the crossings.} of each other.  We have not tried to find the mirror images of any other knot types as a tie knot. However, arguments similar to \prop{infinitely-often} allows us to immediately prove the following.

\begin{proposition} If a knot is a tie knot, its mirror image is also a tie knot.
\end{proposition}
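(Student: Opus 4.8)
The plan is to translate ``mirror image'' into an operation on tie sequences and then repair that operation so it lands back inside the left-wrapped family. Recall that the mirror image of a knot is obtained by switching every crossing of a diagram. In a tie diagram the $L,R,C$ crossings are over-crossings exactly when the move is an in-move and under-crossings exactly when it is an out-move, while the tuck contributes the three crossings $t_1,t_2,t_3$. Thus switching all crossings corresponds to reflecting the tie diagram, and I would use the reflection across the vertical plane containing the straight strand: this interchanges the left and right regions, hence swaps every $L\leftrightarrow R$ while leaving the directions ${}_i,{}_o$ (and the alternation required by condition~(2)) untouched. Since any two reflections of $\R^3$ differ by a rotation, this reflected diagram again represents the mirror image of the original tie knot $K$.

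The advantage of this particular reflection is that it is almost compatible with the FM-assumptions. It carries the two allowed facades to one another, $L_oR_iC_oT \leftrightarrow R_oL_iC_oT$, and it preserves conditions~(1)--(3); its only defect is condition~(0), since the reflected sequence now begins with an $R$ move rather than an $L$ move. This is precisely the paper's observation that a right-wrapped tie knot is the mirror image of the corresponding left-wrapped one. So the whole content of the proposition is the claim that this reflected (right-wrapped) sequence can be re-realized as a genuine left-wrapped tie sequence of the same knot type.

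To do this I would argue in the spirit of \prop{infinitely-often}. First I would reduce the reflected sequence by the Reduction Lemma (\lem{reduction}); this is legitimate because the reductions are built from Reidemeister moves and never invoke condition~(0), so by \cor{reduction} the reflected diagram has a reduced form $[\dots]C\ast$. I would then rebuild a full, left-wrapped tie sequence realizing this reduced diagram: starting with $L$, ending in a valid facade $C_oT$, and inserting an alternating block of $L$ and $R$ moves together with a suitable choice of facade, the length of the block chosen so that the mandatory start-direction (dictated by the parity of the number of moves) comes out as an $L$ move. That the knot type is unchanged under this rebuilding is then confirmed by running \lem{reduction} and \lem{crossing-replace} in reverse, exactly as in the recognition lemmas, so that the rebuilt sequence reduces back to the mirror's reduced diagram.

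The hard part will be this re-realization step, and it is genuinely unavoidable rather than cosmetic. A naive flip of every direction ${}_i \leftrightarrow {}_o$ switches all crossings but produces the illegal ending $[\dots]C_iT$, while the clean $L\leftrightarrow R$ swap above produces the illegal start $R[\dots]$; neither stays in the family on its own. Moreover the length must in general change: the right-handed trefoil occurs as $L_iR_oL_iC_oT$ at four moves, but by \lem{trefoil} its mirror, the left-handed trefoil, first appears at seven moves. The crux of the proof is therefore to show that the freedom in the number of moves, in the inserted alternating $L,R$ block, and in the choice of facade is always sufficient to convert the reflected diagram into a valid left-wrapped tie sequence, after which \lem{reduction} verifies that its knot type is indeed the mirror of $K$.
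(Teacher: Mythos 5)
Your setup is sound as far as it goes: reflecting the diagram across the vertical plane through the straight strand does realize the mirror image, it swaps every $L\leftrightarrow R$ while preserving the in/out subscripts, and it correctly reduces the proposition to showing that every right-wrapped sequence can be re-realized as a left-wrapped tie sequence of the same knot type. The gap is in that re-realization step, which you yourself flag as the crux but never carry out --- and the mechanism you sketch cannot work as stated. Every reduction in \lem{reduction} either annihilates the whole sequence or modifies only its tail; the first move is never touched. Consequently any rebuilt sequence that starts with $L$ reduces, by \cor{reduction}, to a reduced sequence that still starts with $L$, whereas your reflected sequence reduces to one that starts with $R$. So no choice of inserted alternating block, facade, or overall length can make the rebuilt left-wrapped sequence ``reduce back to the mirror's reduced diagram'': the two reduced diagrams differ in their very first crossing, and running \lem{reduction} and \lem{crossing-replace} in reverse can never reconcile that. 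Adjusting parity so that ``the start-direction comes out as an $L$ move'' addresses the wrong constraint; the obstruction is the letter of the first move, not its subscript.

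What is missing is a way to turn an $R$-starting reduced diagram into an $L$-starting one. The geometric fix is to rotate the reduced diagram by $180^{\circ}$ in space about the vertical axis: this is an ambient isotopy (so the knot type is untouched), and it swaps $L\leftrightarrow R$ and in $\leftrightarrow$ out simultaneously. Composing it with your reflection, the net operation keeps every letter and flips every subscript --- which is exactly what the paper does. The paper first reduces the original sequence (reduction I or II), then switches every in-move to an out-move and vice versa in the reduced sequence (switching all crossings gives the mirror, and the first move stays an $L$ move, so FM-assumption (0) survives automatically), and then handles the only remaining issue --- the parity rule tying the first subscript to the number of moves --- by inserting one extra alternating move when needed and appending $C_iR_oL_iC_oT$ or $C_iL_oR_iC_oT$. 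If you replace your rebuild step by this rotate-then-extend argument your proof closes, but at that point it coincides with the paper's proof rather than giving an alternative to it.
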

\begin{proof}  Assume the tie sequence of the knot has $n$ tie-moves. First, perform either reduction I or II (which ever is needed),  and create the corresponding tie diagram of the reduced tie sequence. Second, switch all the crossings by changing in-moves to out-moves and vice-versa. This creates a knot diagram of the mirror image of the original knot.  Finally, we add tie moves to the reduced tie sequence to create a tie knot that represents the mirror image of the original knot. To do this, we consider 2 cases. (Recall that tie sequences starting $L_o$ have an odd number of tie moves, and those starting with $L_i$ have an even number of tie moves.)

Case~1a: We perform reduction I and $n$ is even. This gives a diagram of $n-1$ (odd) tie moves with sequence $L_i[\dots]C_ot_1$. When we switch the crossings, the first move becomes $L_o$ and the $t_1$ becomes $L_o$ or $R_o$. We then add either $C_i R_o L_i C_o T$, or $C_i L_o R_i C_o T$ to the end.

Case~1b: We perform reduction I and $n$ is odd. We proceed in a similar way to Case 1a. 

Case 2a:  We perform reduction II and $n$ is even. This gives a diagram of $n-4$ (even) tie moves ending in $L_i$ or $R_i$. When we switch the crossings the first move becomes $L_o$ (requiring an odd number of tie moves). We thus add an extra $R_o$ after $L_i$, or $L_o$ after $R_i$, before adding $C_i R_o L_i C_o T$, or $C_i L_o R_i C_o T$ to the end.

Case 2b: We perform reduction II and $n$ is odd. We proceed in a similar way to Case 2a.

\noindent
In all cases the final tie sequence gives a knot equivalent to the mirror image, as can be seen by using reduction II, then III or IV.
\end{proof}
\subsection{Tie Knots Alternate}\label{alternating}

We can now ask a very simple question: are all knots tie knots? We prove in \thm{alternate} below, that all tie knots are alternating. Thus we can definitely say that not all knots are tie knots, since not all knots are alternating. Indeed, in 2018,  H.~Chapman~(\cite{chapman} Theorem~1) proved that all but exponentially few knot types (prime or composite) are nonalternating.  More recently, Y. Belousov and A. Malyutin~\cite{BM} proved that hyperbolic knots are not generic, which implies Chapman's main result.

\begin{theorem}\label{thm:alternate}
All tie knots are alternating.
\end{theorem}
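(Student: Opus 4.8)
The plan is to exhibit, for every tie knot, a single alternating diagram, namely the tie diagram of its \emph{reduced} tie sequence. The first step is to translate the alternating condition into the tie-sequence language: an in-move is an over-crossing and an out-move is an under-crossing, so FM-assumption (2) says exactly that, as we traverse the active strand from $a$ to $b$, the crossings alternate over/under. Thus the active portion of any tie diagram already alternates, and the whole question is whether this alternation survives along the passive strand and across the two junctions at $a$ and $b$.

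Second, I would pass to the reduced tie diagram given by \cor{reduction}, of the form $L[\dots]C\ast$ or $L[\dots]Ct_1$. This step is essential: in an unreduced diagram the tuck contributes the self-crossings $t_2,t_3$, and the pair $t_1,t_2$ are both over-crossings of the active strand, immediately breaking alternation; the Reduction Lemma (\lem{reduction}) removes $t_2,t_3$ while preserving the knot type, leaving a diagram whose final crossing ($\ast$ or $t_1$) is a single crossing that continues the alternation of the active strand. A key feature of the reduced form is that every remaining crossing is a crossing of the active strand with the passive strand, with no active-active crossings, so each crossing is visited exactly once along the active strand and once along the passive strand, with opposite over/under readings.

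Because each crossing is active-passive, the closed diagram alternates if and only if the in/out labels alternate in active order (true by FM-assumption (2)) and in \emph{passive} order, with the labels matching correctly at $a$ and $b$. So the decisive step is to read off the order in which the crossings are met as one traverses the passive strand---down the straight strand and around the neck loop---and to check that the in/out labels alternate in that order. Here I would use the structural facts already established for tie diagrams: crossings on the straight strand occur in a downward (hence reverse-active) order, and crossings on the neck loop occur in increasing order from $a$. Each $C$ move is where the active strand switches between crossing the straight strand and crossing the neck loop, so the $C$ moves partition the sequence into blocks of alternating $L$/$R$ moves; within a block alternation in passive order is immediate, and one must verify the blocks reassemble alternately. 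The four-in-hand $L_iC_ot_1$ of \fig{trefoils-examples}, whose passive order is $t_1,C_o,L_i$ (giving under, over, under), is the model computation.

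The main obstacle is precisely this passive-order bookkeeping for an arbitrary reduced sequence, rather than only the special families appearing in \lem{reduction}: one must track how the neck-loop and straight-strand orderings interleave across successive $C$ moves and confirm that alternation never fails at a block boundary or at the junctions $a,b$. An attractive alternative that sidesteps the explicit ordering is a checkerboard argument: take the connected reduced tie diagram, two-color its complementary regions, and show that every crossing has the same type relative to this coloring (the over-strand always meets the shaded region on the same side), which is equivalent to the diagram being alternating; I would try to read this uniform crossing type directly from the in/out pattern of the tie sequence. Either way, once the reduced diagram is shown to be alternating the theorem follows, strengthening the finite-length \prop{alternating} to tie knots of all lengths and, with \cor{numCross}, underpinning the crossing-number statement announced after the theorem.
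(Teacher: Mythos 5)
Your setup is correct and is in fact the same as the paper's: pass to the reduced tie sequence of \cor{reduction}, note that in-moves are over-crossings and out-moves are under-crossings so that FM-assumption (2) gives alternation along the active strand, observe that after reduction every crossing is an active--passive crossing, and reduce Theorem~\ref{thm:alternate} to checking that the in/out labels also alternate in passive order and match up at the junctions $a$ and $b$. Your observation that the reduction step is genuinely necessary (in the unreduced diagram the tuck crossings $t_1,t_2$ are consecutive over-crossings of the active strand) is also correct and consistent with how the paper proceeds.

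The problem is that the proposal stops exactly where the real work begins. The passive-order verification, which you yourself flag as ``the main obstacle,'' is the entire mathematical content of the theorem, and you do not supply it; your statement that ``within a block alternation in passive order is immediate, and one must verify the blocks reassemble alternately'' is precisely the assertion requiring proof, and it is not immediate, because the neck-loop crossings are not met in active order as one traverses the passive strand (left-side crossings advance clockwise from $a$, right-side crossings counterclockwise, so the passive order interleaves the two sides nontrivially). The paper's proof consists almost entirely of this bookkeeping: (i) consecutive crossings down the straight strand have opposite directions, by the parity fact that between two visits to the straight strand the active strand crosses the neck loop an even number of times (FM-assumptions (1) and (2)); (ii) since the neck loop bounds a disk and the reduced sequence ends in $C\ast$, crossings on the neck loop occur in entering/exiting pairs, and a two-case analysis (the active strand entering the central region from the right after an odd number of straight-strand crossings, or from the left after an even number) shows these pairs alternate when read along the neck loop from $a$, with in-moves flanking $a$; (iii) a final parity argument shows the two crossings flanking $b$ are both in-moves or both out-moves, so alternation survives the junction. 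None of this appears in your proposal, and the checkerboard alternative is likewise only a suggestion: proving that every crossing meets the shaded regions the same way would require the same analysis of how the in/out pattern sits against the neck loop and straight strand, just in different language. As written, this is a correct plan with the decisive step missing, not a proof.
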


\begin{proof} We prove that such a tie knot has an alternating tie diagram.  Recall, we view the tie  diagram as having two parts: the active strand from start point $a$ to $b$, and passive strand of the knot following the straight strand from point $b$, around the neck loop and finishing at point $a$. \fig{alternating-proof} right illustrates this, where we have deliberately not shown whether the first crossing is an over- or under-crossing.

\begin{figure}[htbp]
\begin{center}
\begin{overpic}[scale=1]{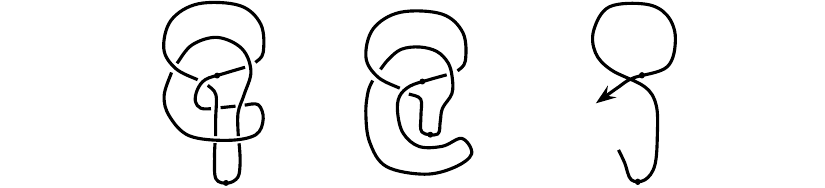}
\put(21,11.5){$L_i$}
\put(22.75,7.3){$R_o$}
\put(23,3.5){$L_i$}
\put(17.25,13.5){$C_o$}
\put(25.5,14.3){$a$}
\put(27,-2){$b$}
\put(31,13){$t_1$}
\put(29.3,7.7){$t_2$}
\put(29.75,3.7){$t_3$}
\put(45.5,10.25){$L_i$}
\put(42,13){$C_o$}
\put(50.25,13.5){$a$}
\put(51.75,7){$b$}
\put(55.5,12.25){$t_1$}
\put(77,14.5){$a$}
\put(77,-2){$b$}
\put(83,17){neck loop}
\put(81,7){straight strand}
\put(63,8){active strand}
\put(72,5){$\ddots$}
\end{overpic}  
\caption{The four-in-hand tie knot (left) has an alternating diagram (center). On the right, the passive strand consists of the neck loop and the straight strand, while the active strand (not shown) starts at point $a$ and follows on from the arrow to point $b$.}
\label{fig:alternating-proof}
\end{center}
\end{figure}

\cor{reduction} tells us that, after applying reductions 0--IV, our tie sequence is $\emptyset$ or $L[\dots]C\ast$, where $\ast$ is a $L$,  or $R$ move. The $\emptyset$ tie sequence corresponds to the unknot, which is alternating. We claim the tie diagram corresponding to the $L[\dots]C\ast$ tie sequence is also alternating.  As we move along the active strand, we know that the crossings corresponding to $L$, $R$, and $C$ moves alternate by FM-assumption (2). Since the reductions have either removed the tuck move entirely, or have reduced it to $t_1$ ($L$ or $R$), we know that as we move along the active strand, all the crossings in the corresponding tie diagram alternate between over and under (or vice-versa).  What remains is to check the crossings alternate as we travel along the passive strand.

For the rest of the proof, we will assume that the first tie move in the tie-sequence is $L_i$. (Similar arguments will hold when the first move is $L_o$.) In this case, the active strand crosses over the passive strand just to the left of point $a$ forming the neck loop. 
We now consider crossings on the straight strand moving down from the first crossing $L_i$. The first crossing down from $L_i$ must be an out-move, an under-crossing. This is because, by FM-assumptions~(1) and (2), the active strand must cross the neck loop 0, 2, or an even number of times before returning to the straight strand. (For example, the tie sequence could start $L_iR_o$, $L_iC_oL_iR_o$, or $L_iC_oR_iL_0$. In each of these cases the last move is an out-move and crosses under the straight strand after the first $L_i$.) The same reasoning shows the second crossing down the straight strand from $L_i$ will be an in-move (over-crossing), the third crossing will be an out-move (under-crossing), and so on. 

Let us now consider the neck loop formed by the first crossing $L_i$. This bounds a region isotopic to a disk in the plane of the tie diagram.  We know that after the first crossing $L_i$, the active strand is outside of this region. We also know our reduced tie sequence ends $C\ast$. Altogether this means that anytime the active strand enters the region, it must leave it again, and hence there are an even number of crossings along the neck loop (not including the  $L_i$ crossing creating the loop).

We claim the crossings must alternate as we travel counter-clockwise along the neck loop from point $a$. Again using FM-assumptions (1) and (2), there are two cases starting at the first $L_i$ crossing. 

{\bf Case A:}  the active strand crosses the straight strand 1, 3, or an odd number of times, entering the central region from the right side, with tie sequence $L_i(R_oL_i\dots R_oL_i)R_oC_i\ast_o$. (Here, the ($R_iL_o\dots R_iL_o$) is the sequence of moves along the straight strand, and $\ast$ is either $L$ or $R$.)  If we walk counter-clockwise from $a$ along the neck loop, we see $C_i$ (coming from $R$ region), then $\ast_o$, then $L_i$.  Thus, the crossings alternate along the neck loop. In particular,  there are two $_i$-moves; one before and one after point $a$.  

{\bf Case B:}  the active strand crosses the straight strand 0, 2, or an even number of times, and enters the central region on the left side, with tie sequence $L_i(R_oL_i\dots R_oL_i)C_o\ast_i$.  If we again walk counter-clockwise from $a$ along the neck loop, we see $\ast_i$, then $C_o$ (coming from $L$ region), then $L_i$. Again, the crossings alternate, and there are in-moves on either side of point $a$.  These in-moves are ``in" from the perspective of the active strand. Thus, as we move from the active strand to the passive strand at point $a$, we see that the crossings do indeed alternate.

We next claim that any other pairs of crossings along the neck loop will preserve the alternating pattern.  Let's consider Case A above, and assume the tie sequence is $L_i(R_oL_i\dots R_oL_i)R_oC_iL_o$. We will see that the next the pair of crossings into and out of the central region occur on the neck loop between the final $C_i$ and $L_o$. The next crossing into the central region could happen on the left side (after the active strand crosses the straight strand an even number of times), with sequence ending $\dots R_oC_iL_o(R_iL_o\dots R_iL_o)C_i\ast_o$.   Alternatively, the next crossing into the central region could happen on  the right side (after the active strand crosses the straight strand an odd number of times), with sequence ending $\dots R_oC_iL_o(R_iL_o\dots R_iL_o)R_iC_o\ast_i$.  With either option, as we walk counterclockwise along the neck loop from point  $a$, we see all the crossings alternate by construction. If we now assume the sequence from Case A ends $L_i(R_oL_i\dots R_oL_i)R_oC_iR_o$, a similar argument holds showing the crossing alternate traveling along the neck loop from point $a$. The main difference is that the next pair of crossings along the neck loop occurs between the last $R_o$ and the initial $L_i$.   We can also mimic this argument for Case B from the previous paragraph. Exactly where the new pair of crossings appears depends on whether the  active strand crosses into the central region from the left or right side. Regardless, the crossings around the neck loop are alternating, and there are in-moves on either side of point $a$. The same arguments also hold for the next pair of crossings along the neck loop, and the next, and so on.

It remains to check that the crossings on either side of $b$ are alternating. We will again use the fact that our reduced tie sequence ends $C\ast$, and the active strand joins the passive strand at point $b$ after this last $\ast$ crossing. There will be a final crossing on the straight strand before the active loop crosses the neck loop for the final (even) number of crossings ending $C\ast$. We first assume there are an odd number of crossings on the straight strand, where we do include the first $L_i$ crossing in our count. This means the last crossing on the straight strand is also an in-move. We showed above that that the active strand crosses the neck loop an even number of times, thus the last crossing out of the neck loop is also an in-move. This means that the crossings on either side of point $b$ are both in-moves. We next assume there are an even number of crossings on the straight strand, so that the last crossing on the straight strand is an out-move. The active strand crosses the neck loop an even number of times, thus the last crossing out of the neck loop is also an out-move. Again, the crossings on either side of point $b$ are both out-moves. In both cases the crossings either side of $b$ are from the perspective of the active strand, and we observe that the crossings do indeed alternate on either side of point $b$.
\end{proof}

As discussed above, we can immediately deduce the following.

\begin{corollary}
Not all knots are tie knots.
\end{corollary}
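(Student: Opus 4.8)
The statement to prove is the final \cor{...}: \emph{Not all knots are tie knots.} The plan is to leverage \thm{alternate}, which establishes that every tie knot is alternating, and combine it with the classical fact that non-alternating knots exist. The logical structure is a one-line contrapositive: if every tie knot is alternating, then any non-alternating knot cannot be a tie knot, so the class of tie knots is a proper subset of all knots.

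First I would invoke \thm{alternate} to assert that an arbitrary tie knot admits an alternating diagram, hence is an alternating knot by definition. Second, I would exhibit (or cite) a specific non-alternating knot; the paper has already flagged $8_{19}$ in the proof of \prop{alternating} as the first non-alternating entry in the standard knot table, so I would point to that knot as a concrete witness. Third, I would conclude that $8_{19}$ is therefore not a tie knot, and since it \emph{is} a genuine knot type, the family of tie knots fails to exhaust all knots. This is essentially a pigeonhole/subset argument and requires no Reidemeister-move bookkeeping of its own, since all the topological work lives inside \thm{alternate}.

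The only subtlety worth stating carefully is that ``alternating'' is a property of the knot type, not merely of a single diagram: a knot is alternating if it possesses \emph{some} alternating diagram. Since \thm{alternate} produces an alternating diagram for every tie knot, every tie knot is alternating as a knot type, and a non-alternating knot type (by definition lacking any alternating diagram) cannot coincide with one. I do not anticipate any genuine obstacle here, as the corollary is an immediate deduction; the substantive content was already discharged in \thm{alternate}. If one wished to make the statement even stronger, one could cite Chapman's result (mentioned earlier in the excerpt, \cite{chapman}) that all but exponentially few knot types are non-alternating, showing that tie knots form a vanishingly small subfamily — but that is a remark rather than part of the proof.
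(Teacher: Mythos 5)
Your proposal is correct and follows exactly the paper's argument: invoke Theorem~\ref{thm:alternate} to conclude every tie knot is alternating, then use the existence of non-alternating knots to conclude the family of tie knots is proper. Your additions (naming $8_{19}$ as a concrete witness and noting that alternating is a property of the knot type) are sound refinements but do not change the route.
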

\begin{proof}
By Theorem~\ref{thm:alternate}, all tie knots are alternating. However, not all knots are alternating. Therefore, not all knots are tie knots.
\end{proof}
\subsection{Other properties of tie knots}

There are several other natural questions which can be asked about the family of tie knots. The FM assumptions (0)--(3) are quite restrictive, and allows us to make several more deductions about tie knots. Before we give the first, recall that a  {\em nugatory crossing} in a knot diagram is one that can be removed by twisting. More formally, a nugatory crossing is one for which there exists a topological circle in the plane of the knot diagram meeting the crossing transversely, but not meeting the knot diagram anywhere else.  A knot diagram is {\em reduced} if it does not contain any nugatory crossings.

\begin{proposition}\label{prop:reduced-diagram} Any tie knot has a reduced tie digram. 
\end{proposition}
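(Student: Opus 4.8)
The plan is to exhibit, for every tie knot, a concrete reduced diagram — namely the tie diagram $D$ of its reduced tie sequence — and to check directly that $D$ has no nugatory crossing. By \cor{reduction} every tie sequence reduces to $\emptyset$ or to the form $L[\dots]C\ast$. The $\emptyset$ case is the unknot, whose crossingless diagram is vacuously reduced, so I would concentrate on $D$ coming from a sequence $L[\dots]C\ast$. The first thing I would record is that after reduction the self-crossings $t_2,t_3$ of the active strand have been removed and the passive strand (straight strand together with neck loop) is embedded; hence \emph{every} crossing of $D$ is a crossing of the active arc $A$ (running $a$ to $b$) with the passive arc $P$ (running $b$ back to $a$), and $A\cup P$ is the whole knot, with $A$ and $P$ meeting only at the shared endpoints $a$ and $b$.

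Next I would reformulate ``nugatory.'' If a crossing $c$ admits a circle $\gamma$ through it meeting $D$ nowhere else, then $\gamma$ cuts the plane into two disks and the knot lies in those disks joined solely at $c$; equivalently $c$ is a cut point of $D$, and in the Gauss chord diagram of the knot the chord of $c$ is crossed by no other chord. Thus it suffices to show that in $D$ the chord of every crossing interleaves with the chord of some other crossing, i.e. no crossing is isolated. I would prove this using the explicit picture assembled in the proof of \thm{alternate}: the crossings lie on two tracks (the straight strand and the neck loop), they alternate along each track, there is an \emph{even} number of crossings on the neck loop, and there are in-moves flanking both $a$ and $b$. Traversing the knot from $a$ along $A$ in tie-sequence order and then back from $b$ along $P$ (up the straight strand, then around the neck loop) produces a concrete cyclic order of the two visits of each crossing, and from the FM move rules I would read off that consecutive active passes always hook a later strand through any loop one tries to pinch off at $c$; that strand forces a second intersection with $\gamma$, contradicting the assumption that $c$ is nugatory.

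The main obstacle I expect is exactly this interleaving bookkeeping: converting the qualitative statement ``another strand always pierces the loop'' into a clean combinatorial fact about the word $L[\dots]C\ast$ and its image along $P$. The genuinely delicate sub-cases are the crossings adjacent to the endpoints $a$ and $b$, where a naive separating circle looks as though it could isolate the first $L$ crossing (the one creating the neck loop) or the final $\ast$ crossing near $b$. Here I would lean on the two structural facts from \thm{alternate} that there are in-moves on either side of $a$ and on either side of $b$: these guarantee that the arcs leaving such a crossing re-enter the opposite side of any candidate circle, so no isolating $\gamma$ can exist. An alternative, and perhaps tidier, way to organize the same information is through the checkerboard (Tait) graph of the alternating diagram $D$: a crossing is nugatory precisely when it is a loop or a bridge of that graph, so the proposition becomes the claim that the Tait graph of $D$ is loopless and $2$-edge-connected, which the two-track structure of $D$ makes transparent. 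Either way, once every crossing is shown to be non-nugatory, $D$ is reduced and the proposition follows.
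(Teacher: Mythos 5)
Your setup matches the paper's: both start from \cor{reduction}, dispose of the $\emptyset$ case, and aim to show the tie diagram of the reduced sequence $L[\dots]C\ast$ has no nugatory crossing. Your reformulation of ``nugatory'' is also sound: since after reduction every crossing is a crossing of the active arc with the embedded passive arc, a crossing is nugatory precisely when its chord in the Gauss diagram is interleaved with no other chord (equivalently, when it is a loop or bridge of the Tait graph). The problem is that you never prove the resulting combinatorial claim. The entire content of the proposition now sits in the sentence ``from the FM move rules I would read off that consecutive active passes always hook a later strand through any loop one tries to pinch off at $c$,'' and you yourself flag this bookkeeping as ``the main obstacle.'' Nothing you import from \thm{alternate} closes it: alternation of crossings along the two tracks, the even count on the neck loop, and the in-moves flanking $a$ and $b$ do not by themselves force interleaving --- an alternating diagram can perfectly well carry a nugatory crossing (a single kink is alternating), so alternation is strictly weaker than what you need. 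The Tait-graph variant has the same status: ``loopless and $2$-edge-connected \dots which the two-track structure makes transparent'' is an assertion, not an argument. As written, this is a plan with a correct reduction but a missing core.

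The paper fills exactly this hole with a short topological observation that sidesteps all chord bookkeeping: every crossing of the reduced diagram lies on one of two embedded closed curves that are part of the diagram itself --- the neck loop, or the ``straight loop'' that exists because the reduced sequence ends in $C\ast$ (the straight strand runs past $b$, back up the active strand to the final $\ast$ crossing, and along the neck loop to the first crossing). Each of these closed curves bounds a disk in the plane, so any circle meeting a crossing transversely crosses that closed curve once there and must therefore cross it again; hence the circle meets the diagram a second time and no crossing is nugatory. If you want to salvage your chord-diagram framework, this is the lemma to insert: for a crossing $c$, one of the two strands through $c$ belongs to a closed loop of the diagram, and parity of intersection with that loop is what forbids an isolating circle --- no case analysis near $a$ and $b$, and no word combinatorics, is then required.
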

\begin{proof} Take the tie sequence for the tie knot, and use reductions 0--IV to simplify the corresponding tie diagram. \cor{reduction} tells us reduced tie sequence is either $\emptyset$ or $L[\dots]C\ast$, where $\ast$ is either $L$ or $R$. The $\emptyset$ tie sequence corresponds to the unknot, which has a reduced tie diagram (the round circle). We will show that $L[\dots]C\ast$ tie sequence is also reduced by showing any (topological) circle intersecting a crossing transversely must meet the knot diagram again. Recall that the crossings in the reduced tie diagram only occur along the neck loop and the straight strand.

Consider any crossing along the neck loop. The neck loop bounds a region isotopic to a disk in the plane of the knot diagram. This means any closed curve entering the neck loop must leave it again. In particular, any circle transversely intersecting a crossing along the neck loop must meet the neck loop (part of the knot diagram) again.   

Consider any crossing along the straight strand, including the first crossing. Since our reduced tie sequence ends $C\ast$, we know the straight stand must eventually turn and cross the neck loop at crossing $\ast$. This creates another loop, which we call the {\em straight loop}, again bounding a region isotopic to a disk. To be clear, the straight loop consists of the straight stand starting at the crossing of the first tie move, then going past the point $b$, then up to the point $\ast$, then along the neck loop back to the first tie move. This is shown in Figure~\ref{fig:reduced-prime} right. Again, any circle transversely intersecting a crossing along the straight strand must meet the straight loop (part of the knot diagram) again. We have shown the tie diagram is reduced. 
\end{proof}

\begin{figure}[htbp]
\begin{center}
\begin{overpic}[scale=1]{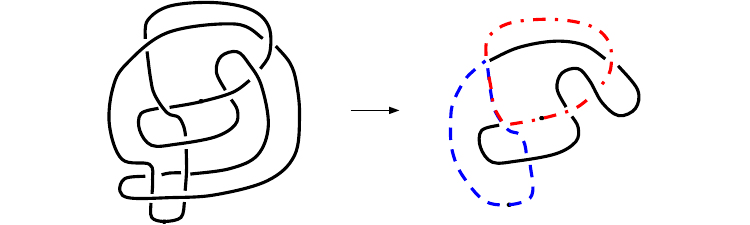}
\put(26,17){$a$}
\put(17.8,16.2){$L_o$}
\put(21.5, 11.65){$R_i$}
\put(28, 14.5){$C_o$}
\put(35, 18.5){$R_i$}
\put(25.5, 7.75){$L_o$}
\put(24.6, 1){$R_i$}
\put(36.75, 24){$C_o$}
\put(17, 24.25){$t_1$}
\put(20.75, 7.65){\small$t_2$}
\put(17.5, 1){$t_3$}
\put(21.25, -2.25){$b$}
%
\put(70.5,14.75){$a$}
\put(63,14){$L_o$}
\put(66.75, 9.75){$R_i$}
\put(73.5, 12){$C_o$}
\put(80.5, 16.5){$R_i$}
\put(82, 22){$C_o$}
\put(56, 22.85){$L_i=t_1$}
\put(67.25, 0){$b$}
\end{overpic}
\caption{On the left, tie knot $L_oR_iC_oR_iL_oR_iC_oT $, which reduces to $L_oR_iC_oR_iC_oL_i$ on the right. The neck loop is shown in red (dot-dash line) and the straight loop is shown in blue (dashed line). These loops overlap along the neck loop between the $L_o$ and the $t_1=L_i$ crossings.}
\label{fig:reduced-prime}
\end{center}
\end{figure}

By combining \thm{alternate} and \prop{reduced-diagram}, we have proved that:
\begin{corollary} Any tie knot has a reduced, alternating knot diagram. 
\end{corollary}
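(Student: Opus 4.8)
The plan is to simply assemble the two results already proved rather than re-derive anything from scratch. The statement claims that any tie knot has a reduced, alternating knot diagram, and this is precisely the conjunction of \thm{alternate} (every tie knot is alternating) and \prop{reduced-diagram} (every tie knot has a reduced tie diagram). So the proof is a one-line citation of both. The only subtlety worth addressing is whether these two results produce the \emph{same} diagram, or whether one might have an alternating diagram that is not reduced and a reduced diagram that is not alternating, which would leave the combined claim unproven.

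First I would check that both proofs act on the same object, namely the tie diagram obtained by applying reductions 0--IV. Reading the two earlier proofs confirms this: both \thm{alternate} and \prop{reduced-diagram} begin by invoking \cor{reduction} to reduce the tie sequence to either $\emptyset$ or $L[\dots]C\ast$, and both then argue about the resulting tie diagram. Since they analyze the identical reduced tie diagram, the alternating property established in \thm{alternate} and the absence of nugatory crossings established in \prop{reduced-diagram} hold simultaneously for that single diagram. This is the key observation that makes the two-line combination legitimate rather than a conflation of two different diagrams.

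Thus my proof would read: given any tie knot, apply reductions 0--IV to obtain the reduced tie diagram guaranteed by \cor{reduction}. By \thm{alternate} this diagram is alternating, and by \prop{reduced-diagram} it is reduced (contains no nugatory crossings). Hence this single diagram is both reduced and alternating, establishing the corollary. I expect the main (and indeed only) obstacle to be the verification that the two cited results genuinely refer to the same reduced diagram; once that is confirmed, the proof is immediate. Since both proofs in fact open with the same appeal to \cor{reduction} and the same case split between $\emptyset$ and $L[\dots]C\ast$, no further argument is required.

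\begin{proof}
This follows immediately by combining \thm{alternate} and \prop{reduced-diagram}. Given any tie knot, apply reductions 0--IV to its tie sequence to obtain the reduced tie diagram described in \cor{reduction} (either $\emptyset$ or $L[\dots]C\ast$). Both \thm{alternate} and \prop{reduced-diagram} are statements about this same reduced tie diagram: the former shows it is alternating, while the latter shows it contains no nugatory crossings and is therefore reduced. Hence this diagram is simultaneously reduced and alternating.
\end{proof}
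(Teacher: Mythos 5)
Your proof is correct and matches the paper exactly: the paper also obtains this corollary by simply combining \thm{alternate} and \prop{reduced-diagram}, both of which argue about the same diagram produced by \cor{reduction}. Your extra care in checking that the two results concern the identical reduced tie diagram is a point the paper leaves implicit, but the argument is the same.
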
 
Kauffman, Murasugi and Thistlethwaite proved \cite{adams, liv, mura} that an alternating knot in a reduced alternating projection of $n$ crossings has crossing number $n$.  This allows us to  conclude that:

\begin{corollary}\label{cor:min-crossing} Any tie knot has a minimal crossing diagram corresponding to its reduced tie sequence.
\end{corollary}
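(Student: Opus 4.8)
The plan is to read the statement as an immediate consequence of the Kauffman--Murasugi--Thistlethwaite theorem just quoted, assembled with the two structural facts already in hand. First I would recall from \cor{reduction} that after applying reductions 0--IV every tie sequence becomes either $\emptyset$ or the generic form $L[\dots]C\ast$ (equivalently $L[\dots]Ct_1$), where by \lem{end-same} the terminal symbol $\ast$ or $t_1$ may be taken to be an ordinary $L$ or $R$ move. The $\emptyset$ case is the unknot, whose round diagram has zero crossings and is trivially minimal, so I would dispose of it at once and concentrate on the form $L[\dots]C\ast$.

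Next I would invoke the corollary immediately preceding this one: the tie diagram associated to the reduced tie sequence is simultaneously \emph{reduced}, by \prop{reduced-diagram}, and \emph{alternating}, by \thm{alternate}. The key bookkeeping step --- and the only place any genuine verification is needed --- is to observe that in this diagram each letter of the reduced tie sequence contributes exactly one crossing: the $L$, $R$, and $C$ moves each cross the straight strand or the neck loop once, and no surviving tuck triple $t_1 t_2 t_3$ remains after reduction (at most a single $t_1$ is left, which \lem{end-same} permits us to count as an ordinary $L$ or $R$ move). Hence a reduced tie sequence with $n$ moves yields a reduced alternating diagram with exactly $n$ crossings.

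Finally I would apply the cited theorem of Kauffman, Murasugi and Thistlethwaite: a knot presented by a reduced alternating diagram with $n$ crossings has crossing number $n$. Consequently the $n$-crossing tie diagram coming from the reduced tie sequence attains the crossing number, and is therefore a minimal crossing diagram, which is exactly the assertion. I expect no real obstacle: the substance lives entirely in \thm{alternate} and \prop{reduced-diagram}, and this corollary is essentially an assembly of those results with the quoted theorem. The one point meriting care is the crossing count, namely checking that the reductions have genuinely eliminated the auxiliary tuck crossings $t_2,t_3$ so that ``number of moves in the reduced tie sequence'' and ``number of crossings in the corresponding tie diagram'' coincide on the nose; once that identification is pinned down, the conclusion is immediate.
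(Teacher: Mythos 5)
Your proposal is correct and follows essentially the same route as the paper: combine \thm{alternate} and \prop{reduced-diagram} to see that the diagram of the reduced tie sequence is a reduced alternating diagram, then apply the Kauffman--Murasugi--Thistlethwaite theorem to conclude it realizes the crossing number. Your additional bookkeeping (that each move of the reduced sequence contributes exactly one crossing, the tuck crossings $t_2,t_3$ having been eliminated) is accurate and a welcome precision, though the paper leaves it implicit since minimality already follows from the diagram being reduced and alternating.
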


Recall that a prime knot is a knot which is not composite. A composite knot $K\#J$ has a diagram where there is a topological circle which intersects the diagram in just two arcs. The circle separates the diagram into sub-knots $K$ and $J$, where one is on the inside of the circle and one is on the outside.

\begin{proposition} All tie knots are prime knots.
\end{proposition}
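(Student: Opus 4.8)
The plan is to show that the reduced tie diagram of any tie knot admits no essential separating circle, using the same kind of disk-region analysis that powered \prop{reduced-diagram}. By \cor{reduction} and the corollaries just established, any tie knot has a reduced, alternating diagram coming from the reduced tie sequence $L[\dots]C\ast$ (or $\emptyset$, which is the unknot and hence prime). For an alternating reduced diagram there is a very clean primality criterion available: a theorem of Menasco states that a prime alternating diagram (one that is not a connected sum of diagrams in the visual sense) represents a prime knot. So the heart of the argument is purely diagrammatic: I want to show that no topological circle meets the reduced tie diagram in exactly two points (transversely, away from crossings) in a way that separates nontrivial portions of the knot on either side.

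First I would set up the two disk-bounding loops identified in the proof of \prop{reduced-diagram}: the \emph{neck loop} and the \emph{straight loop}, each of which bounds a region isotopic to a planar disk, and which together control where all the crossings live (the crossings occur only along the neck loop and the straight strand, as noted there). A decomposing circle $\gamma$ for a connected sum meets the diagram in two points and splits the crossings into two nonempty groups. I would argue that any such $\gamma$ can be pushed off these two disk regions: since every crossing lies on the boundary arcs of the neck loop or straight loop, and both loops bound disks, a separating circle that isolates a proper nonempty set of crossings would have to enter one of these disk regions and hence cross the corresponding loop an even number of times, contradicting that it meets the knot in only two points unless it actually encloses no crossings or all of them. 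The key structural fact I would exploit is that the straight strand threads linearly down and then the active strand wraps the neck loop monotonically (the ``no unnecessary crossings'' consequences established in Section~\ref{tiediagrams}): the crossings are strung along two arcs in a single monotone order, so they cannot be partitioned by a two-point circle into two clusters each of which is itself a closed-up tangle.

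The cleanest route is probably to combine primality-of-alternating with the reduced-and-alternating conclusion already in hand. Concretely: having a reduced alternating diagram, by the Menasco/Thistlethwaite theory a tie knot fails to be prime only if its reduced alternating diagram is \emph{visibly} a connected sum, i.e.\ there is a circle meeting the diagram in two points with crossings on both sides. So I would reduce the whole proposition to ruling out such a visible connected-sum circle, and then rule it out via the disk-region argument of the previous paragraph. I expect the main obstacle to be making the combinatorial ``monotone ordering of crossings along two disk-bounding arcs'' argument fully rigorous: one must verify that a two-point separating circle genuinely cannot split the crossing set, and the care lies in handling the way the active strand alternately visits the neck loop and the straight strand (so the crossing sequence interleaves the two arcs). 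I would organize this by tracking, as in the alternating proof, each excursion of the active strand into and out of the central region, showing each excursion returns to the same side of any candidate circle, so no such circle can isolate a nonempty proper subset of crossings. The routine verification that $\emptyset$ gives the unknot (prime by convention, or excluded) is immediate and handled first.
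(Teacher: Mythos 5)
Your approach is essentially the paper's, with one genuine structural difference worth noting. The paper likewise reduces via \cor{reduction} to the diagram of $L[\dots]C\ast$, identifies the same two disk-bounding circles (the neck loop and the straight loop), and then shows by a four-case analysis that any topological circle containing some but not all of the crossings must meet the diagram in at least three points; from this it concludes primality directly, without citing Menasco. Your route inserts Menasco's theorem (a reduced alternating diagram that is not visibly a connected sum represents a prime knot) between the diagrammatic fact and the topological conclusion. That insertion is not redundant: primality quantifies over \emph{all} diagrams of the knot, so checking that one particular diagram admits no two-point separating circle does not by itself rule out compositeness. The paper's direct conclusion implicitly relies on exactly the alternating-diagram theory you invoke (together with \thm{alternate} and \prop{reduced-diagram}), so your version is, if anything, logically tighter at that step.

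On the diagrammatic core, however, your sketch has a soft spot that the paper's case analysis resolves and that you should not leave deferred. Your parity argument --- that a separating circle entering a disk region must cross the bounding loop an even number of times, ``contradicting that it meets the knot in only two points'' --- is not yet a contradiction: a circle can perfectly well meet the neck loop exactly twice (an even number) and nothing else. What kills such a circle is a \emph{third} intersection, and the paper extracts it from the active strand. In each of its four cases (the circle contains some but not all neck-loop crossings; all neck-loop crossings including the first crossing; some straight-strand crossings but not the first crossing; crossings from both strands), the arc of the neck loop or straight loop trapped inside the circle already forces two intersections, and then the active strand must cross the circle at least once more in order to connect a crossing inside it to a crossing outside it (or to reach the first crossing or the point $b$). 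Your proposed ``excursion tracking'' is the right supplement and would deliver exactly this third intersection, but as written it is a plan rather than a proof; to finish, organize it as the paper does, by cases on which of the two loops carries the crossings enclosed by the candidate circle.
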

\begin{proof} 
Given the tie knot, use reductions 0--IV to create a reduced tie diagram denoted by $K$. If the tie knot is the unknot, it is prime.  We will show that any (topological) circle that contains some, but not all, of the crossings of $K$ must intersect $K$ in three or more crossings. This means that all such circles are not separating, and so the tie knot must be prime. 

Recall that any crossing in $K$ only occurs when the active strand intersects either the neck loop or the straight strand. As shown in Figure~\ref{fig:reduced-prime}, there are two topological circles of note in $K$:  the neck loop and the straight loop. We note the  {\em first crossing} of $K$ is the crossing which creates the neck loop. Now, suppose a (topological) circle $C$ contains one (or more) crossings of $K$.   Since we are trying to show the knot is prime, we assume $C$ does not contain all of the crossings of $K$.  There are four main cases to consider. 

Case A: Assume that $C$ contains at least one, but not all of the crossings along the neck loop. Then, because the neck loop is also a topological circle, there is (at least) one arc of the neck loop inside~$C$ which intersects $C$ (at least) twice.  We know the second arc creating the crossing inside $C$ comes from the active strand. Start at the crossing inside $C$ and follow this active strand. This strand must leave $C$ (at least) once to get to the crossing(s) outside of $C$. Thus $C$ has (at least) three intersections with $K$ and so is not a separating circle.

Case B: Assume that $C$ contains all of the crossings along the neck loop including the first crossing of $K$. (In this case, $C$ may also contain some, but not all crossings along the straight strand.)  Then, since the straight loop is a topological circle sharing an arc of the neck loop, we see that $C$ intersects the straight loop in (at least) two places. By assumption, there is at least one crossing of $K$ along the straight strand outside of $C$. Thus, the active strand must cross $C$ at least once (from the first crossing to the crossing outside $C$). Thus $K$ crosses $C$ in (at least) three places, and $C$ is not a separating circle.

Case C: Assume that $C$ contains at least one crossing on the straight strand, but does not contain the first crossing of $K$. This crossing also lies on the straight loop, a topological circle.  This means there is (at least) one arc of the straight strand inside $C$ and this arc intersects $C$ (at least) twice.  The second arc creating the crossing inside $C$ comes from the active strand. The active strand starts at the first crossing, which is outside of $C$. Thus the active strand crosses $C$ (at least) once. Altogether we see $C$ has (at least) three intersections with $K$, and is not a separating circle.

Case D: Assume $C$ contains crossings from both the straight strand and the neck loop (so includes the first crossing, and possibly other crossings along the neck loop). We also assume $C$ does not contain all the crossings from the neck loop, or else we would be in Case B. Then $C$ contains (at least) one arc of the neck loop, which intersects $C$ (at least) twice. Since $C$ contains the first crossing, and the active strand starts here, we know the active strand must intersect $C$ to get to the crossing on the neck loop outside of $C$. Thus $C$ has (at least) three intersections with $K$, and is not a separating circle.
\end{proof}

\subsection{Open questions}
We end with a conjecture and an open question. If we look at a knot table containing all knots with up to 8 crossings, then the only knots which do not appear as tie knots are $8_5$, $8_{10}$, and then $8_{15}$ through $8_{21}$. Of these, only $8_{19}, 8_{20}$ and $8_{21}$ are non-alternating. 

Recall that the bridge number of a knot is the minimum number of bridges required for any diagram of the knot. (A bridge is an arc of a knot diagram which includes at least one overcrossing.) Interestingly\footnote{Charles Livingston's KnotInfo: Table of Knot Invariants~\cite{liv-table} is a huge help here.}, the only knots with up to 8 crossings that are 3-bridge knots are $8_5$, $8_{10}$, then $8_{15}$ through $8_{21}$. These are precisely the knots which do not appear as tie knots!  All the other tie knots are 2-bridge knots. For higher crossing numbers, there can be both alternating and non-alternating knots of any bridge index. Altogether, this leads us to conjecture.

\begin{conjecture} All tie knots are prime, alternating 2-bridge knots.
\end{conjecture}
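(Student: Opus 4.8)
The plan is to attack the conjecture by proving its two component claims. The alternating half is already established as \thm{alternate}, so the genuinely new content is the assertion that every tie knot is a \emph{2-bridge} knot. My strategy would be to work directly from the reduced tie diagram produced by \cor{reduction}, which presents every non-trivial tie knot in the standard form $L[\dots]C\ast$. The key structural fact I would exploit is that, as established in the proof of \prop{reduced-diagram}, all crossings in a reduced tie diagram lie on exactly two arcs of the passive strand: the \emph{neck loop} and the \emph{straight strand}. The active strand weaves back and forth between these two arcs, and I would try to show this two-arc structure forces the bridge number down to two.

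\textbf{Main approach.} I would first recall that bridge number equals the minimum, over all diagrams, of the number of maximal overpass arcs (arcs running over a consecutive sequence of crossings before dipping under). The cleanest route is to construct an explicit 2-bridge presentation. In the reduced tie diagram, consider the active strand traveling from $a$ to $b$. At each $L$, $R$, or $C$ crossing it passes either over (an in-move) or under (an out-move) the passive strand, and by FM-assumption~(2) these alternate. I would aim to isotope the diagram so that all the overcrossings are organized into precisely two overpasses. The natural candidates are the two disk-bounded regions identified in \prop{reduced-diagram}: the neck loop and the straight loop. Because the active strand enters and exits each region an even number of times, and because the reductions guarantee the alternating in/out pattern, I expect the overpasses to group naturally into a configuration realizing bridge number~$2$. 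Concretely, I would try to exhibit a height function (a Morse function on the diagram) with exactly two maxima, or equivalently display the knot as the numerator closure of a rational tangle built from the $L$, $R$, $C$ twist regions.

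\textbf{Alternative, cleaner route.} A potentially much shorter argument uses the classification of alternating knots: a celebrated theorem (Menasco's theorem, together with the work of Kauffman--Murasugi--Thistlethwaite already cited in the excerpt) implies that an alternating knot is 2-bridge if and only if its reduced alternating diagram is a 4-plat, equivalently, its diagram has a particular ``staircase'' structure of twist regions arranged along a line. Since \thm{alternate} gives a reduced alternating diagram and \prop{reduced-diagram} pins down its combinatorial form, I would try to verify that the reduced tie diagram is always a 4-plat by reading off the twist regions directly from the reduced tie sequence $L[\dots]C\ast$. The twist regions correspond exactly to maximal runs between region-changes in the sequence, and the linear arrangement along the straight strand is precisely the structure a 4-plat requires.

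\textbf{Main obstacle.} I expect the hard part to be controlling the global arrangement of crossings along the neck loop, not the local alternation. The proof of \thm{alternate} shows crossings alternate, but a 2-bridge (4-plat) structure additionally demands that the twist regions line up in a single linear ``chain'' with no branching, so that the associated tangle is rational rather than an arbitrary algebraic tangle. Verifying that the weaving of the active strand between the neck loop and straight strand never produces a non-rational (e.g.\ Montesinos-type) tangle is the crux; a knot like $8_5$ (which \lem{torus}-style reasoning excludes as a tie knot, and which is genuinely 3-bridge) shows the claim is false for general alternating diagrams, so the argument must use the tie-specific restriction that there are only two weaving arcs. I would make this rigorous by setting up a careful induction on the number of region-changes in the reduced tie sequence, peeling off one twist region at a time via a rational-tangle continued-fraction expansion and checking at each step that the remaining tangle stays rational. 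Because this is currently stated as a \emph{conjecture} rather than a theorem, I would flag that the linearity/rationality step is exactly where a complete proof remains to be supplied, and the supporting evidence is the empirical match, noted in the excerpt, between the non-tie-knots up to 8 crossings and the 3-bridge knots up to 8 crossings.
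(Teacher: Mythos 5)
There is nothing in the paper to compare your proposal against: the statement you were given is the paper's \emph{Conjecture}, not a theorem, and the authors supply no proof --- only the empirical observation that the knots up to 8 crossings which fail to appear as tie knots ($8_5$, $8_{10}$, $8_{15}$--$8_{21}$) are exactly the 3-bridge ones. Your proposal does not close this gap either, and to your credit you say so explicitly: the entire content of the conjecture beyond \thm{alternate} is the 2-bridge claim, and that is precisely the step (``linearity/rationality'') you leave unproved. So what you have written is a reasonable research plan, not a proof, and it should not be presented as resolving the statement.

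Concretely, the gap sits where you locate it, but it is worth being sharper about why your two routes stall. In the direct route, ``two weaving arcs'' (neck loop and straight strand) does not by itself bound the bridge number: the active strand can re-enter each of the two disk regions many times, and each re-entry can create a new maximal overpass, so exhibiting a height function with exactly two maxima requires a genuine isotopy argument that you have not set up. In the ``cleaner'' route, the criterion you invoke --- an alternating knot is 2-bridge if and only if its reduced alternating diagram is a 4-plat --- is only correct up to flypes (via Menasco--Thistlethwaite); a reduced alternating diagram of a rational knot need not literally display the staircase form, so ``reading off'' the plat structure from $L[\dots]C\ast$ is not automatic. Your proposed induction (peeling off one twist region via a continued-fraction expansion) also lacks an invariant: after removing a twist region, the resulting diagram is no longer a tie diagram, so none of the paper's structural lemmas (\cor{reduction}, \prop{reduced-diagram}) apply to it, and rationality of the remainder is exactly what is being assumed. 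Finally, a small misattribution: $8_5$ is not excluded by \lem{torus}-style reasoning; it is excluded only by the exhaustive check of the 85 sequences, i.e., by data, which is the same evidence the authors cite for the conjecture.
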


We end with another open question about tie knots. When considering a tie knot as an actual physical tie, the tie can be modeled as a strip of paper with the knot as the core curve. As such, the tie knot is a framed knot. Can we work out the self-linking number of a tie knot? In their paper \cite{finkmao}, Fink and Mao describe two additional characteristics of the shape of the tie knot: symmetry and balance. The latter is the number of reversals of the winding direction of the tie. Perhaps this quantity could be used in a calculation of linking number.

\section{Acknowledgements}
The authors wish to thank the referee for their suggestions and corrections. The exposition is clearer as a result of this advice.

The authors are grateful for the support of Washington \& Lee University. The first author's research was funded by two Washington \& Lee Lenfest research grants for summers 2018 and 2019. The second and third authors' research was funded by the Washington \& Lee University Summer Research Scholars Program 2018. All of the authors wish to thank Professor Wayne Dymaceck for donating ties for us to use, and Professor Aaron Abrams for lending us knot toys and thin ropes. The ties and the toys made studying tie knots much easier.
\appendix 
\section{Classification of The Knots}
\label{appendix:data}
 This is the table with our data of all 85 tie sequences and their respective knot types. The table gives the number of the tie given by Fink and Mao \cite{finkmao}, then the number of tie moves, then the tie sequence, then the knot type of the corresponding tie knot. The next column gives the type of tie sequence for twist knots given in \lem{twist}, and the final column gives the common name for the tie knot.
 
 \begin{table}
\begin{tabular}{| c | c | l | c | c |  c|}
\hline
FM-number & \# tie moves & tie sequence & knot type & twist knot type & common name \\
\hline
\hline
1& 3 &	$L_oR_iC_oT$	& $0_1$ & &Oriental \\
4 &	5 &	$L_oC_iR_oL_iC_oT$ &	$0_1$ & & Nicky \\
5 &	5 &	$L_oC_iL_oR_iC_oT$ &	$0_1$ & & Pratt/Shelby \\
7 &	6 &	$L_iR_oC_iL_oR_iC_oT$ & $0_1$ & & half-Windsor\\
8 &	6 &	$L_iR_oC_iR_oL_iC_oT$ & $0_1$ & & \\
12 & 	7 & 	$L_oR_iL_oC_iR_oL_iC_oT$ & $0_1$ & & St Andrew \\
14 & 	7 & 	$L_oR_iL_oC_iL_oR_iC_oT$ & $0_1$ & &\\
24 & 	8 &	$L_iR_oL_iR_oC_iL_oR_iC_oT$ & $0_1$ & & \\
26 & 	8 &  	$L_iR_oL_iR_oC_iR_oL_iC_oT$ & $0_1$ & & \\
46 & 	9 & 	$L_oR_iL_oR_iL_oC_iR_oL_iC_oT$ & $0_1$ & & \\
50 &	9 &	$L_oR_iL_oR_iL_oC_iL_oR_iC_oT$	 & $0_1$  & & \\
\hline
2 &	4 & 	$L_iR_oL_iC_oT$ & 	$3_1$ & 1 & four-in-hand\\
18 & 	7 & 	$L_oC_iR_oC_iL_oR_iC_oT$  & $3_1$ & 3 & Plattsburgh \\
19 &	7 & 	$L_oC_iR_oC_iR_oL_iC_oT$ 	& $3_1$ & 3 & \\
20 & 	7 & 	$L_oC_iL_oC_iR_oL_iC_oT$ 	& $3_1$ & 3 & \\
21 & 	7 & 	$L_oC_iL_oC_iL_oR_iC_oT$ & $3_1$ & 3& \\
31 &	8 &	$L_iC_oR_iL_oC_iR_oL_iC_oT$ 	& $3_1$ & 3 & Windsor \\
32 &	8 & 	$L_iC_oL_iR_oC_iL_oR_iC_oT$  	& $3_1$ & 3 & co-Windsor or Persian\\
33 &	8 & 	$L_iC_oR_iL_oC_iL_oR_iC_oT$ 	& $3_1$ & 3 & \\
35 &	8 & 	$L_iC_oL_iR_oC_iR_oL_iC_oT$ 	& $3_1$ & 3 & \\
57 &	9 &	$L_oC_iR_oL_iR_oC_iL_oR_iC_oT$  & $3_1$ & 3 & \\
60 &	9 &	$L_oC_iR_oL_iR_oC_iR_oL_iC_oT$ 	& $3_1$ & 3 & \\
68 &	9 & 	$L_oC_iL_oR_iL_oC_iR_oL_iC_oT$ 	& $3_1$ & 3& \\
71 & 	9 & 	$L_oC_iL_oR_iL_oC_iL_oR_iC_oT$  & $3_1$ & 3 & \\
\hline
3 & 	5 & 	$L_oR_iL_oR_iC_oT$  &  $4_1$ & 1 & Kelvin \\
34 & 	8 & 	$L_iR_oC_iL_oC_iR_oL_iC_oT$  	 &  $4_1$ & 2 & \\
36 & 	8 & 	$L_iR_oC_iR_oC_iL_oR_iC_oT$   &  $4_1$  & 2 & \\
37 &	8 &	$L_iR_oC_iL_oC_iL_oR_iC_oT$   &  $4_1$ & 2 & \\
39 & 	8 & 	$L_iR_oC_iR_oC_iR_oL_iC_oT$  	 &  $4_1$  & 2 & \\
54 &	9 & 	$L_oR_iC_oL_iR_oC_iL_oR_iC_oT$  	&  $4_1$ & 2 & Hanover \\
55 & 	9 & 	$L_oR_iC_oR_iL_oC_iR_oL_iC_oT$  	&  $4_1$ & 2 & \\
56 &	9 &	$L_oR_iC_oL_iR_oC_iR_oL_iC_oT$  	&  $4_1$ & 2 & \\
59 &	9 & 	$L_oR_iC_oR_iL_oC_iL_oR_iC_oT$  	&  $4_1$  & 2 & \\
\hline
10 & 	6 &	$L_iC_oL_iR_oL_iC_oT$	 & $5_1$ &&  \\
79 &	9 &	$L_oC_iL_oC_iR_oC_iL_oR_iC_oT$  & $5_1$ & & \\
82 & 	9 & 	$L_oC_iL_oC_iR_oC_iR_oL_iC_oT$	& $5_1$ & & \\
84 &	9 &	$L_oC_iL_oC_iL_oC_iR_oL_iC_oT$ 	& $5_1$  & & \\
85 &	9  & 	$L_oC_iL_oC_iL_oC_iL_oR_iC_oT$  	& $5_1$ & & \\
\hline
6 & 	6 & 	$L_iR_oL_iR_oL_iC_oT$ 	& $5_2$ & 1 & Victoria \\
9 & 	6 & 	$L_iC_oR_iL_oR_iC_oT$ 	 & $5_2$ & 1 & \\
61 & 	9 & 	$L_oR_iL_oC_iR_oC_iL_oR_iC_oT$ & $5_2$ & 1 & \\
63 & 	9 &	$L_oR_iL_oC_iR_oC_iR_oL_iC_oT$  & $5_2$ & 1 & \\
72 &	9 &	$L_oR_iL_oC_iL_oC_iR_oL_iC_oT$  & $5_2$ & 1 & \\
74 &	9 &	$L_oR_iL_oC_iL_oC_iL_oR_iC_oT$  & $5_2$ & 1 & \\
78 &	9 & 	$L_oC_iR_oC_iL_oC_iR_oL_iC_oT$  & $5_2$ & 3 & Balthus \\
80 & 9 & 	$L_oC_iR_oC_iR_oC_iL_oR_iC_oT$  & $5_2$ & 3 & \\
81 &	9 & 	$L_oC_iR_oC_iL_oC_iL_oR_iC_oT$   & $5_2$ & 3 & \\
83 & 	9 & 	$L_oC_iR_oC_iR_oC_iR_oL_iC_oT$  & $5_2$ & 3 & \\
\hline
11 &	7 &	$L_oR_iL_oR_iL_oR_iC_oT$  	& $6_1$ & 1 & \\
15 & 	7 & 	$L_oR_iC_oR_iL_oR_iC_oT$  	& $6_1$  & 2 & \\
\hline
13 &	7 & 	$L_oR_iC_oL_iR_oL_iC_oT$  & $6_2$ & & \\
17 &	7 &	$L_oC_iL_oR_iL_oR_iC_oT$  	& $6_2$ & & \\
\hline
16 &	7 &	$L_oC_iR_oL_iR_oL_iC_oT$  	& $6_3$ & & \\
\hline 
\end{tabular}
\end{table}

\begin{table}
\begin{tabular}{| c | c | l | c | c| c|}
\hline
FM-number & \# tie moves & tie sequence & knot type & twist knot type & common name \\
\hline
\hline

42 &	8 &	$L_iC_oL_iC_oL_iR_oL_iC_oT$ & $7_1$ & & \\
\hline
22 & 	8 &	$L_iR_oL_iR_oL_iR_oL_iC_oT$  & 	$7_2$ & 1 & \\
41 &	8 &	$L_iC_oR_iC_oR_iL_oR_iC_oT$ 	& 	$7_2$ & 3&  \\
\hline
29 &	8 &	$L_iR_oL_iC_oL_iR_oL_iC_oT$ & $ 7_3$ & & \\
40 &	8 & 	$L_iC_oL_iC_oR_iL_oR_iC_oT$  & $ 7_3$ & & \\
\hline
23& 	8& 	$L_iR_oL_iC_oR_iL_oR_iC_oT$  	& $7_4$ & & Cavendish \\
\hline
30 &	8 & 	$L_iC_oL_iR_oL_iR_oL_iC_oT$  & 	$7_5$ & & \\
38 &	8 &	$L_iC_oR_iC_oL_iR_oL_iC_oT$	  & 	$7_5$ & & \\
\hline
27 &	8 &	$L_iR_oC_iR_oL_iR_oL_iC_oT$  &	$7_6$  & & \\
28 & 	8 & 	$L_iC_oR_iL_oR_iL_oR_iC_oT$  	&	$7_6$ &  & \\
\hline
25 &	8 &	$L_iR_oC_iL_oR_iL_oR_iC_oT$  &  $	7_7$ & & Christensen \\
\hline
43 & 	9 & 	$L_oR_iL_oR_iL_oR_iL_oR_iC_oT$  & $8_1$ & 1 & \\  
75 & 	9 & 	$L_oR_iC_oR_iC_oR_iL_oR_iC_oT$ 	& $8_1$ & 2 &  \\   
\hline
73 & 	9 & 	$L_oR_iC_oL_iC_oL_iR_oL_iC_oT$  &  $	8_2$ & & \\
77 &	9 & 	$L_oC_iL_oC_iL_oR_iL_oR_iC_oT$   &  $8_2$ & & \\
\hline
48 &	9 & 	$L_oR_iL_oR_iC_oR_iL_oR_iC_oT$  & $8_3$ & & \\
\hline
44 &	9 &	$L_oR_iL_oR_iC_oL_iR_oL_iC_oT$  & $8_4$  & & Grantchester\\
65 &	9 & 	$L_oC_iL_oR_iC_oR_iL_oR_iC_oT$  & $8_4$  & &  \\
\hline
53 &	9 &	$L_oC_iL_oR_iL_oR_iL_oR_iC_oT$  & $8_6$  &&  \\
64 &	9 & 	$L_oR_iC_oR_iC_oL_iR_oL_iC_oT$  &  $8_6$ & & \\
\hline
70 & 	9 & 	$L_oC_iR_oL_iC_oL_iR_oL_iC_oT$  & $8_7$ & & \\
76 &	9 & 	$L_oC_iL_oC_iR_oL_iR_oL_iC_oT$  & $8_7$  & & \\
\hline
52 & 	 9 & 	$L_oC_iR_oL_iR_oL_iR_oL_iC_oT$  & $8_8$ & & \\
67 &	9 & 	$L_oC_iR_oC_iR_oL_iR_oL_iC_oT$  & $	8_8$ & & \\
\hline
69 &	9 &	$L_oC_iL_oR_iC_oL_iR_oL_iC_oT$  &  $	8_9$  & &  \\
\hline
49 &	9 & 	$L_oR_iL_oC_iL_oR_iL_oR_iC_oT$  & $8_{11}$ & & \\
62 & 	9 & 	$L_oR_iC_oL_iC_oR_iL_oR_iC_oT$  & $8_{11}$ & &  \\
\hline
51 &	9 &	$L_oR_iC_oR_iL_oR_iL_oR_iC_oT$  & $8_{12}$  &&  \\
\hline
45 & 	9 & 	$L_oR_iL_oC_iR_oL_iR_oL_iC_oT$  &  $	8_{13}$  & & \\
58 & 9 &	$L_oC_iR_oL_iC_oR_iL_oR_iC_oT$  & $8_{13}$ & & \\
\hline
47 &	9 &	$L_oR_iC_oL_iR_oL_iR_oL_iC_oT$  & $8_{14}$  & & \\
66 &	9 & 	$L_oC_iR_oC_iL_oR_iL_oR_iC_oT$  & $8_{14} $ & & \\
\hline
\end{tabular}
\end{table} 
 

\bibliographystyle{amsalpha}

 \end{document}